\def\H_0{\mathcal{H}_0(T)}
\def\ST{\tau_{ST}}
\def\H{{\cal H}}
\newtheorem{df}{Definition}[section]
\newtheorem{thm}[df]{Theorem}
\newtheorem{rema}[df] {Remark}
\def\sfstp{{\hskip-1em}{\bf.}{\hskip1em}}
\def\enddemo{\qed \endtrivlist}
\let\csname enddemo*\endcsname=\enddemo
\def\qedsymbol{\ifmmode\bgroup\else$\bgroup\aftergroup$\fi
  \vcenter{\hrule\hbox{\vrule
height.6em\kern.6em\vrule}\hrule}\egroup}
\def\qed{\ifmmode\else\unskip\nobreak\fi\quad\qedsymbol}
\begin{document}
\title
{ \bf The Drazin spectrum  of tensor product of \\Banach  algebra elements and elementary operators\/}

\author {\normalsize Enrico  Boasso\\ }


\date{ }


\maketitle \thispagestyle{empty} 


\vskip 1truecm

\setlength{\baselineskip}{12pt}

\begin{abstract}\noindent Given unital Banach algebras $A$ and $B$ and elements $a\in A$ and $b\in B$, 
the Drazin spectrun of $a\otimes b\in A\overline{\otimes} B$ will be fully characterized, where  
$A\overline{\otimes} B$ is a Banach algebra that is the completion of $A\otimes B$ with respect to a uniform crossnorm. 
To this end, however, first the isolated points of the spectrum
of $a\otimes b\in A\overline{\otimes} B$ need to be characterized. 
On the other hand, given Banach spaces $X$ and $Y$ and
Banach space operators $S\in L(X)$ and $T\in L(Y)$, using similar arguments
the Drazin spectrum of  $\ST\in L( L(Y,X))$, the elementary operator defined by $S$ and $T$,
will be fully characterized. \par
\par
\noindent $\bf{Keywords\colon}$ \rm Drazin spectrum;  Banach algebra; tensor product; 
Banach space; elementary operator. \par
\noindent $\bf{AMS }$  $\bf{Subject}$ $\bf{Classification\colon}$ \rm 47A10; 46H05, 47B49.
\end{abstract}


\section {\sfstp Introduction}\setcounter{df}{0}
\
\indent The relationships among tensor products and spectral theory have been extensively studied. For example,
it is well known that  given unital Banach algebras $A$ and $B$, if $A\overline{\otimes}B$ is a Banach algebra that
is the completion of the algebraic tensor product $A\otimes B$ with respect to a uniform crossnorm,
then $\sigma (a\otimes b)=\sigma (a)\sigma (b)$, where $a\in A$, $b\in B$ and $\sigma (\cdot )$ is the
usual spectrum. When $X$ and $Y$ are Banach spaces, $X\overline{\otimes}Y$ is a
completion of the algebraic tensor product $X\otimes Y$ with respect to an appropriate norm and $S\colon X\to X$
and $T\colon Y\to Y$ are Banach space operators, then $\sigma_*(S\otimes T) =\sigma_* (S)\sigma_*(T)$,
where $\sigma_*(\cdot)$ is either the approximate point spectrum or the defect spectrum (\cite{I2}).
However, if $\sigma_+(\cdot)$ denotes either the Fredholm or the Browder spectrum, then the
following formula holds: $\sigma_+(S\otimes T)=\sigma (S)\sigma_+(T)\cup \sigma_+(S)\sigma (T)$
(\cite{E, CD}). Formulas similar to the last one were proved also for other spectra such as the 
upper and lower Fredholm spectra and the Browder approximate point spectrum, if instead of the usual
spectrum, the approximate point spectrum is considered (\cite{I2, DDK}). On the other hand,
it is well known that the spectral theories of tensor products of operators and elementary operators
are deeply connected (\cite{BP, E}). Now well, similar formulas for the aforementioned spectra
of elementary operators were proved in \cite{E, BDJ}.

\indent The first objective of this article is to fully characterize $\sigma_{DR} (a\otimes b)$,
the Drazin spectrum of $a\otimes b\in A\overline{\otimes}B$, in terms of the spectrum and the Drazin
spectrum of $a\in A$ and $b\in B$, where $A$, $B$ and $A\overline{\otimes}B$ are as at the
beginning of the previous paragraph. In fact, in section 4  the relationship between  $\sigma_{DR} (a\otimes b)$
and $\mathbb{D}= \sigma (a) \sigma_{DR}(b)\cup \sigma_{DR} (a)\sigma(b)$ will be studied. In particular, it will prove that
 $\sigma_{DR} (a\otimes b)\subseteq \mathbb{D}$ and $\sigma_{DR} (a\otimes b)\setminus\{0\}= \mathbb{D}\setminus\{0\}$, necessary and sufficient conditions
to characterize when $\sigma_{DR} (a\otimes b)$  and $\mathbb{D}$ coincide will be given  and clearly, 
when $\sigma_{DR} (a\otimes b)\subsetneq\mathbb{D}$, then  $\mathbb{D}=\sigma_{DR} (a\otimes b)\cup \{0\}$,
$0\notin \sigma_{DR} (a\otimes b)$. On the other hand, the second objective of this work is to study the Drazin spectrum of
 elementary  operators between Banach spaces, which will be fully characterized in section 5 using arguments similar
to the ones  in  section 4.\par

\indent However, to this end, the isolated points of $\sigma (a\otimes b)$, $a\otimes b\in A\overline{\otimes}B$,
$A$, $B$ and $A\overline{\otimes}B$ as before, need to be characterized in terms of the poles and the complements of the poles in the isolated points of the 
spectrum of both $a\in A$ and 
$b\in B$. This will be done in section 3, after having recalled in section 2 the preliminary definitions and results that
will be used in the present work. In addition, this characterization will deepen the knowledge of the
isolated points of the spectrum of elementary tensors obtained in \cite{HK, DHK}. \par

\indent It is worth noticing that the Drazin spectrum is a key notion in the research area of
Weyl's and Browder's theorems and their generalizations. What is more,
in the recent past (generalized) Weyl's and (generalized) Browder's theorems of both
tensor product operators and elementary operators have been studied (\cite{SK, AD,KD, Du, DDK, BDJ, HK, DHK}). On the other hand, 
the set of isolated points of the spectrum is another central notion in this area of research. Therefore, 
the results obtained in the present work, apart from their interest related to spectral theory and the Drazin inverse,
also present an interest related to the area of
Weyl's and Browder's theorems.

\section {\sfstp Preliminary definitions and results}\setcounter{df}{0}
\
\indent From now on $A$ will denote a unital Banach algebra with unit $e$.
 Recall that the element $a\in A$ is said to be 
\it Drazin invertible\rm, if there exists a necessarily unique $b\in A$ and some $m\in \Bbb N$ such that
$$
a^mba=a^m, \hskip.5truecm  bab=b,\hskip.5truecm ab=ba.
$$
The least non-negative integer $m$ for which the above equations holds is called the \it index \rm of $a$.
In addition, if the Drazin inverse of $a$ exists, then it will be denoted by $a^D$; concerning this research area
see for example \cite{D, K, RS, BS, Bo}. \par

\indent On the other hand, the notion of \it regularity \rm was introduced and studied in \cite{KM, MM}. Recall that
given a unital Banach algebra $A$ and a regularity $\mathcal R\subseteq A$,
the \it spectrum derived from the 
regularity \rm $ \mathcal R$  is defined as $\sigma_{\mathcal R} (a)=\{ \lambda\in \Bbb C\colon a-\lambda\notin  \mathcal R\}$,
where $a\in A$ and $ a-\lambda$ stands for  $a-\lambda e$, see \cite{KM}. In addition, the \it resolvent set of $a$ defined by the regularity $\mathcal R$ \rm
is the set $\rho_{\mathcal R} (a)=\{ \lambda\in \Bbb C\colon a-\lambda\in  \mathcal R\}$. Naturally,
when $\mathcal R=A^{-1}$, the set of all invertible elements of $A$, $\sigma_{A^{-1}}(a)=\sigma (a)$, the spectrum
of $a$, and $\rho_{A^{-1}}(a) =\rho(a)=\mathbb C\setminus\sigma (a)$, the resolvent set of $a$. Next consider the set
$\mathcal{DR} (A)$ = $\{ a\in A\colon  \hbox{  a is Drazin invertible}\}$. According to
\cite[Theorem 2.3]{BS}, $\mathcal{DR} (A)$ is a regularity. This fact led to the following definition,
see \cite{BS}.\par

\begin{df}\label{def1}
Let $A$ be a unital Banach algebra. The Drazin spectrum
of  $a\in A$ is the set
$$
\sigma_{ \mathcal{ DR} }(a) = \{\lambda\in \Bbb C \colon a-\lambda\notin  \mathcal{ DR}(A)\}.
$$
\end{df}
 
\indent Naturally $\sigma_{ \mathcal{ DR} }(a)\subseteq \sigma(a)$, $\rho (a)\subseteq\rho_{\mathcal{ DR}}(a)$
and according to \cite[Theorem 1.4]{KM}, the Drazin spectrum
of a Banach algebra element satisfies the spectral mapping theorem for analytic functions
defined on a neighbourhood of the ususal spectrum which are non-constant on each component of its 
domain of definitioin (see also \cite[Corollary 2.4]{BS}). In addition, according to \cite[Proposition 2.5]{BS},
$\sigma_{ \mathcal{ DR} }(a)$ is a closed subset of $\Bbb C$. Next a description of the  spectrum and the Drazin spectrum 
in terms of the isolated and limit points of the spectrum given in 
\cite{Bo} will be recalled.  However, to this end, some preparation
is needed. First of all, if $K\subseteq \mathbb C$, then, iso $K$ will stand for the isolated points of
$K$ and acc $K= K\setminus$ iso $K$.\par

\indent Let $A$ be a unital Banach algebra and $a\in A$. As in the case of   
the spectrum of a Banach space operator, the resolvent function of 
$a$, $R(\cdot, a)\colon\rho(a)\to A$, is holomorphic and iso $ \sigma(a)$ coincides with the set of
isolated singularities of $R(\cdot ,  a)$. Furthermore, as in the case of an operator,
see \cite[p. 305]{T}, if $\lambda_0\in$ iso $\sigma(a)$, then it is possible to
consider the Laurent expansion of  $R(\cdot ,  a)$ in terms of $(\lambda-\lambda_0)$.
In fact,
$$
R(\lambda ,  a)=\sum_{n\ge 0}a_n(\lambda-\lambda_0)^n +\sum_{n\ge 1}b_n(\lambda-\lambda_0)^{-n},
$$
where $a_n$ and $b_n$ belong to $A$ and are obtained in an standard way using the
functional calculus. In addition, this representation is valid when $0<\mid \lambda-\lambda_0\mid<\delta$, for 
any $\delta$ such that $\sigma(a)\setminus \{\lambda_0\}$ lies outside the circle $\mid\lambda-\lambda_0\mid=\delta$. 
What is more important, the discussion of \cite[pp. 305 and 306]{T}
can be repeated for elements in a unital Banach algebra. Consequently, $\lambda_0$
will be called \it a pole of order $p$ of   $R(\cdot ,  a)$, \rm if there is $p\ge 1$ such that
$b_p\neq 0$ and $b_m=0$, for all $m\ge p+1$. The set of poles of $a$
will be denoted by $\Pi(a)$; clearly $\Pi (a)\subseteq $ iso $\sigma (a)$. \par

\indent Let $X$ be a Banach space and denote by $L(X)$ the algebra of bounded and linear maps
defined on and with values in $X$. If $T\in L(X)$, then $N(T)$ and $R(T)$ will stand for the null space and the
range of $T$ respectively. Recall that the \it descent \rm and the \it ascent \rm of $T\in L(X)$ are
$d(T) =\hbox{ inf}\{ n\ge 0\colon  R(T^n)=R(T^{n+1})\}$ and
$ a(T)=\hbox{ inf}\{ n\ge 0\colon  N(T^n)=N(T^{n+1})\}$
respectively, where if some of the above sets is empty, its infimum is then defined as $\infty$, see  
for example \cite{MM, BS}. Now well, according to \cite[Theorem 4]{K}, $\Pi(T)=\{\lambda\in\sigma (a)\colon
a(T-\lambda)\hbox{ and } d(T-\lambda) \hbox{ are finite}\}$. What is more, if $A$ is a unital Banach algebra and $a\in A$, 
then, according to  \cite[Theorem 11]{Bo}, $\Pi (a)=\Pi (L_a)=\Pi (R_a)$,
where $L_a$, $R_a\in L(A)$ are the operators defined by left and right multiplication,
i.e., given $x\in A$, $L_a(x)=ax$ and $R_a(x)=xa$ respectively.\par
\indent Now well, according to \cite[Theorem 12]{Bo}, 
$$
 \sigma(a)=\sigma_{\mathcal{DR}}(a)\cup \Pi(a), \hskip.2truecm
\sigma_{\mathcal{DR}}(a)= \hbox{ \rm acc }\sigma (a)\cup I(a), \hskip.2truecm \Pi(a)=\sigma(a)\cap\rho_{\mathcal{DR}}(a),
$$
where $I(a)=$ iso $\sigma (A)\setminus \Pi (a)$. Note that $\sigma_{\mathcal{DR}}(a)\cap \Pi(a)=\emptyset$
and (acc $\sigma (a))\cap I(a)=\emptyset$.\par

\indent Finally, if $A$ and $B$ are unital Banach algebras, then $A\otimes B$ will stand for the 
algebraic tensor porduct of $A$ and $B$, while $A\overline{\otimes} B$ will denote a Banach algebra that is  the completion of
$A\otimes B$ with respect to a uniform crossnorm; see for example \cite[Chapter 11, Section 7]{H}.

\section {\sfstp Isolated points}\setcounter{df}{0}
\
\indent To prove the key results of this article, the isolated points of the spectrum of the tensor product of two
Banach algebra elements need to be characterized first. To this end, however, some prepartion is needed.\par

\begin{rema}\label{rem11}\rm Let $A$ be a unital Banach algebra and consider $a\in A$. Recall that
if $p^2=p\in A$, then $pAp$ is a unital Banach algebra with unit $p$. \par
\noindent (i) Necessary and sufficient for $\lambda\in \sigma (a)$ to belong to $\Pi (a)$ is that
there exists $0\neq p^2=p\in A$ such that $ap=pa$, $(a-\lambda)p$ is nilpotent and $(a-\lambda+p)\in A^{-1}$.
Note in particular that $\sigma (a)=\Pi (a)=\{\lambda\}$ if and only if $p=e$, which is equivalent to the fact that $a-\lambda$
is nilpotent.\par
\noindent (ii) The number $\lambda\in \sigma (a)$ belongs to $I(a)$ if and only if
there is $0\neq p^2=p\in A$ such that $ap=pa$, $(a-\lambda)p$ is quasi-nilpotent but not nilpotent and $(a-\lambda+p)\in A^{-1}$.
What is more, as in the previous statement, $\sigma (a)=I (a)=\{\lambda\}$ if and only if $p=e$, which is equivalent to the fact that
$a-\lambda$ is quasi-nilpotent but not nilpotent.\par
\noindent Statements (i) and (ii) are well known and they can be derived, for expample, from \cite{RS} and \cite{Ko}. Furthermore,
it is not difficult to prove that statement (i) (respectively statement (ii)) is equivalent to the existence
of $0\neq p^2=p\in A$ such that $ap=pa$ (equivalently, $a-\lambda= p(a-\lambda)p + p'(a-\lambda)p'$, $p'=e-p$), 
$p(a-\lambda)p$ is nilpotent (respectively quasi-nilpotent but not nipotent) in the Banach algebra $pAp$ and $p'(a-\lambda)p'$ is invertible in the Banach
algebra $p'Ap'$. Next consider $B$ another unital Banach algebra and $b\in B$. Note that both the 
identity of $B$ and of $A\overline{\otimes} B$ will be denoted by $e$.\par

\noindent (iii) Since $\sigma (a\otimes b)=\sigma(a)\sigma(b)$ (\cite[Theorem 11.7.6]{H}),
according to \cite[Theorem 6]{HK},
$$
\hbox{iso }\sigma(a\otimes b)\setminus \{ 0\}= 
(\hbox{iso } \sigma(a)\setminus \{ 0\})( \hbox{iso } \sigma(b)\setminus \{ 0\}).
$$
\pagestyle{myheadings} \markboth{  }{ \hskip7truecm \rm Enrico Boasso  }
\noindent (iv) Set
$$
\mathbb L= (I(a)\setminus \{ 0\}) (I(b)\setminus \{ 0\})\cup  (I(a)\setminus \{ 0\}) (\Pi (b)\setminus \{ 0\})
\cup  (\Pi (a)\setminus \{ 0\}) (I(b)\setminus \{ 0\}).
$$
Then clearly, iso $\sigma(a\otimes b)\setminus \{ 0\}= \mathbb L\cup (\Pi (a)\setminus \{ 0\})(\Pi
(b)\setminus \{ 0\})$.\par

\noindent (v) Let $\lambda\in$ iso
$\sigma (a\otimes b )\setminus\{0\}$. Then, it is not difficult to prove that there exist $n\in\mathbb N$ and
finite sequences $\{\mu\}=\{\mu_1 ,\dots ,\mu_n\}\subseteq$ iso $\sigma(a)\setminus \{ 0\}$
and $\{\nu\}=\{\nu_1,\dots ,\nu_n\}\subseteq$ iso
$\sigma(b)\setminus \{ 0\}$ such that $\lambda=\mu_i \nu_i$ for all
$i=1,\dots , n$, and that if there exist $\mu\in$  $\sigma(a)\setminus \{ 0\}$
and  $\nu\in$ $\sigma(b)\setminus \{ 0\}$ such that $\lambda=\mu\nu$, then there is 
$i_0$, $1\le i_0\le n$, for which $\mu=\mu_{i_0}$ and $\nu=\nu_{i_0}$.\par

\end{rema}

\indent In the following theorem the position of $0$ in the isolated and the limit points of the
$\sigma (a\otimes b)$   will be studied, where $a\in A$ and $b\in B$, $A$ and $B$ unital Banach algebras. 
To this end, the description of the spectrum and the Drazin spectrum presented in \cite[Theorem 12]{Bo} will
be central. Note that if $0\in $ iso $\sigma (a\otimes b)$, then $0\in $ iso $\sigma (a)$ or   $0\in $ iso $\sigma (b)$.

\pagestyle{myheadings} \markboth{  }{ \hskip7truecm \rm Enrico Boasso  }

\begin{thm}\label{thm3}Let $A$ and $B$ be two unital Banach algebras and consider
$a\in A$ and $b\in B$. Then, the following statements hold.\par
\noindent \rm (i)\it $\sigma (a)=\Pi (a)=\{0\}$ or $\sigma (b)=\Pi (b)=\{0\}$ if and only if $\sigma (a\otimes b)=\Pi (a\otimes b)=\{0\}$.\par
\noindent \rm (ii)\it Suppose that neither $a$ nor $b$ is  nilpotent. Then necessary and sufficient for $\sigma (a)=I(a)=\{0\}$  or $\sigma (b)=I(b)=\{0\}$  is that $\sigma (a\otimes b)=I(a\otimes b)=\{0\}$.\par
\noindent \rm (iii)\it If $0\in \Pi (a)$ $($respectively $0\in \Pi (b)$$)$ and $b$ is invertible $($respectively $a$ is invertible$)$
or if  $0\in \Pi (a)\cap \Pi (b)$, then $0\in \Pi (a\otimes b)$.\par
\noindent \rm (iv)\it Suppose that $a$ is not nilpotent $($respectively $b$ is not nilpotent$)$. Then, 
if $0\in\Pi (a)$ $($respectively $0\in \Pi (b)$$)$ and $0\in I(b)$ $($respectively $0\in I(a)$$)$, then
$0\in I(a\otimes b)$.\par
\noindent \rm (v) \it If $b$ $($respectively $a)$ is invertible and $0\in I(a)$ $($respectively $0\in I(b))$, then $0\in I(a\otimes b)$.  \par
\noindent \rm (vi)\it If $0\in I (a)\cap I(b)$, then $0\in I(a\otimes b)$. \par
\noindent \rm (vii)\it Suppose that $ \sigma (b)\neq \{0\}$ $($respectively  $ \sigma (a)\neq \{0\}$$)$. Then, if $0\in$ acc $\sigma (a)$
$($respectively $0\in$ acc $\sigma (b)$$)$, then $0\in $ acc $\sigma (a\otimes b)$.\par
\end{thm}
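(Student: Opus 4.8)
The plan is to reduce every statement to the idempotent characterizations of $\Pi(\cdot)$ and $I(\cdot)$ recorded in Remark~\ref{rem11}, feeding them two structural facts: $\sigma(a\otimes b)=\sigma(a)\sigma(b)$, and that for a uniform crossnorm $x\otimes y=0$ exactly when $x=0$ or $y=0$, together with $(x\otimes y)^n=x^n\otimes y^n$. From the latter, $a\otimes b$ is nilpotent if and only if $a$ or $b$ is nilpotent; and since $\sigma(a)$ and $\sigma(b)$ are nonempty, the product set $\sigma(a)\sigma(b)$ equals $\{0\}$ precisely when $\sigma(a)=\{0\}$ or $\sigma(b)=\{0\}$, so $a\otimes b$ is quasi-nilpotent if and only if one factor is. Throughout, $0\in\sigma(a\otimes b)$ whenever $0$ lies in $\sigma(a)$ or $\sigma(b)$.

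For (i) and (ii) I would argue directly. By Remark~\ref{rem11}(i), $\sigma(a)=\Pi(a)=\{0\}$ (or the same for $b$) says exactly that $a$ (or $b$) is nilpotent, while $\sigma(a\otimes b)=\Pi(a\otimes b)=\{0\}$ says $a\otimes b$ is nilpotent; the two are equivalent by the nilpotency remark above. Statement (ii) is the quasi-nilpotent analogue: under the standing assumption that neither $a$ nor $b$ is nilpotent, $\sigma(a)=I(a)=\{0\}$ or $\sigma(b)=I(b)=\{0\}$ means one factor is quasi-nilpotent but not nilpotent, which by the product formula and $(x\otimes y)^n=x^n\otimes y^n$ is equivalent to $a\otimes b$ being quasi-nilpotent but not nilpotent, i.e. $\sigma(a\otimes b)=I(a\otimes b)=\{0\}$ via Remark~\ref{rem11}(ii). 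Statement (vii) is the simplest: choosing $\mu_k\in\sigma(a)\setminus\{0\}$ with $\mu_k\to 0$ and a fixed $\nu\in\sigma(b)\setminus\{0\}$, the points $\mu_k\nu$ lie in $\sigma(a\otimes b)\setminus\{0\}$ and tend to $0$, so $0\in\acc\,\sigma(a\otimes b)$.

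The heart of the matter is (iii)--(vi), which I would treat uniformly by a corner decomposition. Using the equivalent form in Remark~\ref{rem11}, choose idempotents $p\in A$ and $r\in B$, commuting with $a$ and $b$ respectively, so that $pap$ and $rbr$ are nilpotent or quasi-nilpotent-not-nilpotent (according to whether $0$ lies in $\Pi$ or $I$) while $p'ap'$ and $r'br'$ are invertible, where $p'=e-p$ and $r'=e-r$. The four elements $p\otimes r,\ p\otimes r',\ p'\otimes r,\ p'\otimes r'$ are mutually orthogonal idempotents summing to $e$ and commuting with $a\otimes b$, so $a\otimes b$ is block diagonal along them with entries $pap\otimes rbr$, $pap\otimes r'br'$, $p'ap'\otimes rbr$ and $p'ap'\otimes r'br'$; when $b$ is invertible I instead use the coarser splitting $p\otimes e,\ p'\otimes e$. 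In every case I take $q'$ to be the corner carrying the invertible entry (namely $p'\otimes r'$, or $p'\otimes e$, a tensor of two invertibles) and $q=e-q'$. On $q$ each entry is a tensor product in which at least one factor is nilpotent or quasi-nilpotent, so its spectrum is $\{0\}$ and $q(a\otimes b)q$ is quasi-nilpotent; whether $q(a\otimes b)q$ is also nilpotent then decides, via Remark~\ref{rem11}(i)--(ii), whether $0\in\Pi(a\otimes b)$ or $0\in I(a\otimes b)$.

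The main obstacle, and the only place the non-nilpotency hypotheses are actually consumed, is deciding nilpotency of $q(a\otimes b)q$. A finite orthogonal direct sum of nilpotents is nilpotent (take the largest index), whereas a single summand $x\otimes y$ with $x,y$ both nonzero and not nilpotent is itself not nilpotent, since $x^n\otimes y^n\neq 0$ by the crossnorm. Thus in (iii) every corner carries a nilpotent factor, so $q(a\otimes b)q$ is nilpotent and $0\in\Pi(a\otimes b)$; in (iv), (v), (vi) the corner $p'\otimes r$ (resp. $p\otimes e$, resp. $p\otimes r$) carries the product of an invertible or non-nilpotent factor with a quasi-nilpotent-not-nilpotent factor, hence is non-nilpotent and forces $0\in I(a\otimes b)$. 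The hypotheses ``$a$ (or $b$) not nilpotent'' are precisely what keeps the needed corner nonzero: $0\in\Pi(a)$ with $a$ not nilpotent forces $\sigma(a)\neq\{0\}$, hence $p'\neq 0$, so $p'\otimes r$ does not collapse. The degenerate possibility $p'=0$ (whence $q'=0$ and the invertibility condition is vacuous in the zero algebra $p'Ap'$) must be acknowledged but is harmless, since the surviving corners already exhibit the required spectral behaviour.
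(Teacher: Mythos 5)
Your proposal is correct and follows essentially the same route as the paper: the same idempotent/corner decomposition (your four orthogonal corners $p\otimes r$, $p\otimes r'$, $p'\otimes r$, $p'\otimes r'$ are exactly the paper's $r$ and $e-r$, with your $q(a\otimes b)q$ equal to the paper's $s_1+s_2+s_3$), the same crossnorm argument that $(x\otimes y)^n=x^n\otimes y^n$ decides nilpotency versus quasi-nilpotency of each corner, and the same appeal to Remark~\ref{rem11}(i)--(ii), including the correct identification of where the non-nilpotency hypotheses keep the decisive corner nonzero. The only deviation is statement (iii), which the paper settles by tensoring Drazin inverses while you run it through the same corner decomposition; since Drazin invertibility is precisely the existence of such a commuting nilpotent-plus-invertible splitting, this is the same content in different packaging.
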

\begin{proof}(i). If $a\in A$ or $b\in B$ is nilpotent, then clearly $a\otimes b\in A\overline{\otimes }B$ is nilpotent.
On the other hand, if $a\otimes b\in A\overline{\otimes }B$ is nilpotent, since the norm considered on $A\overline{\otimes }B$
is a crossnorm, then $a$ or $b$ must b nilpotent.\par

\noindent (ii). If $\sigma (a)=I(a)=\{ 0\}$  or $\sigma (b)=I(b)=\{ 0\}$, then $\sigma (a\otimes b)=\{ 0\}$. Now well, if $\sigma (a\otimes b)=\Pi (a\otimes b)=\{0\}$,
then according to statement (i), $a$ or $b$ must be nilpotent, which is impossile. Hence, $\sigma (a\otimes b)= I (a\otimes b)=\{0\}$.
To prove the converse implication, note that $\sigma (a)=\{0\}$ or $\sigma (b)=\{0\}$. However, since $a$ and $b$ are not nilpotent,
$\sigma (a)=I(a)=\{0\}$  or $\sigma (b)=I(b)=\{0\}$.\par

\noindent (iii). The conditions of the statement under consideration imply that $a\in A$ and $b\in B$ are Drazin invertible.
Using the Drazin inverses of $a\in A$ and $b\in B$, it is not difficult to prove that 
$a\otimes b\in  A\overline{\otimes }B$ is Drazin invertible. \par 

\noindent (iv) Suppose that $0\in \Pi (a) \cap I(b)$. Clearly $0\in$ iso $\sigma (a\otimes b)$. Accoding to Remark \ref{rem11}(i)-(ii), there exist $0\neq p^2=p\in A$ and $0\neq q^2=q\in B$
such that $ap=pa$, $bq=qb$, $(e-p)a(e-p)$ is invertibe in $(e-p)A(e-p)$, $(e-q)b(e-q)$ is invertible in $(e-q)B(e-q)$,  $pap\in pAp$ is nilpotent
and $pbq\in qBq$ is quasi-nilpotent but not nilpotent. Let $r=p\otimes q+(e-p)\otimes q+p\otimes (e-q)\in A\overline{\otimes} B$. Note that
$r=r^2$ and $e-r=(e-p)\otimes (e-q)$. In particular, $r\neq 0$ and $ra\otimes b=a\otimes br$. In addition, $(e-r)a\otimes b(e-r)=(e-p)a(e-p)\otimes (e-q)b(e-q)$,
which is is invertible in $(e-p)A(e-p)\overline{\otimes }(e-q)B(e-q)=(e-r)A\overline{\otimes}B(e-r)$. Moreover, a straightforward calculation proves that
$ra\otimes br=s_1+s_2+s_3$, where $s_1=pap\otimes qbq\in S_1=pAp\overline{\otimes} qBq$, $s_2=(e-p)a(e-p)\otimes qbq\in S_2=(e-p)A(e-p)\overline{\otimes} qBq$
and $s_3=pap\otimes (e-q)b(e-q)\in S_3=pAp\overline{\otimes} (e-q)B(e-q)$. Note that since $s_is_j=0$, $1\le i\neq j\le 3$, 
$(ra\otimes br)^n=s_1^n+s_2^n+s_3^n$.\par
\indent Now well, since $0\in \Pi(a)\cap I(b)$,  $pap$ is nilpotent and $qbq$ is quasi-nilpotent but not nilpotent.
In particular, there is $k\in \mathbb N$ such that for $n\ge k$, $(ra\otimes br)^n=s_2^n$. However, since
$(e-p)a(e-p)\in (e-p)A(e-p)$ is invertible,  $qbq\in  qBq$ is quasi-nilpotent but not nilpotent and the norm 
on the tensor product is a crossnorm,  
$ ra\otimes b r$ is quasi-nilpotent but not nilpotent. Therefore, according to Remark \ref{rem11}(ii),
$0\in I(a\otimes b)$. The case  $0\in I(a)\cap \Pi(b)$ can be proved in a similar way.\par
\noindent (v) Adapt the argument in the proof of statement (iv) using in particular $p=0$ (respectively $q=0$)
in the case $a\in A$ invertible (respectively $b\in B$ invertible).\par

\noindent (vi) Proceed as in the proof of statement (iv) but assuming that $pap\in pAp$ is quasi-nilpotent but not nilpotent.
Note then that since $(ra\otimes br)^n=s_1^n+s_2^n+s_3^n$ and  $s_is_j=0$, $1\le i\neq j\le 3$, necessary and suficient for $ra\otimes br$ to be nilpotent
is that there exists $k\in \mathbb N$ such that $s_1^k=s_2^k=s_3^k=0$, which, since the norm of $A\overline{\otimes }B$
is a cross norm and $pap\in pAp$ and $qbq\in qBq$ are quasi-nilpotent but not nilpotent, is impossible.\par

\noindent (vii) A straightforward calculation proves this statement.
\end{proof}

\indent To characterize the non-null isolated points of $\sigma (a\otimes b)$ ($a\in A$, $b\in B$, $A$ and $B$ unital Banach algebras), 
first of all a particular case need to  be considered.\par

\begin{thm}\label{thm6}Let $A$ and $B$ be two unital Banach algebras and consider
$a\in A$ and $b\in B$ such that $\sigma (a)=\{\mu\}$, $\sigma (b)=\{\nu\}$ and  $\lambda=\mu\nu\neq 0$.\par
\noindent \rm (i)\it  If $\sigma (a)=\Pi(a)=\{\mu\}$ and $\sigma (b)=\Pi (b)=\{\nu\}$, then  $\sigma (a\otimes b)=\Pi (a\otimes b)=\{\lambda\}$.\par
\noindent \rm (ii)\it If either $\sigma (a)=I(a)=\{\mu\}$ or $\sigma (b)=I (b)=\{\nu\}$, then  $\sigma (a\otimes b)=I (a\otimes b)=\{\lambda\}$.\par
\end{thm}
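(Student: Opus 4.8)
The plan is to reduce both statements, by way of the identity $\sigma(a\otimes b)=\sigma(a)\sigma(b)=\{\lambda\}$ (\cite[Theorem 11.7.6]{H}) together with the characterizations in Remark \ref{rem11}(i)--(ii), to a single question: is $a\otimes b-\lambda$ nilpotent, or quasi-nilpotent but not nilpotent? Since $\sigma(a\otimes b)=\{\lambda\}$ in both parts, Remark \ref{rem11}(i) identifies $\sigma(a\otimes b)=\Pi(a\otimes b)=\{\lambda\}$ with the nilpotency of $a\otimes b-\lambda$, while Remark \ref{rem11}(ii) identifies $\sigma(a\otimes b)=I(a\otimes b)=\{\lambda\}$ with $a\otimes b-\lambda$ being quasi-nilpotent but not nilpotent. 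Setting $u=a-\mu$ and $v=b-\nu$, I would first record the decomposition
$$a\otimes b-\lambda = u\otimes b + \mu\,(e\otimes v),$$
whose two summands commute, and observe that $\lambda\neq0$ forces $\mu\neq0$ and $\nu\neq0$, so $a$ and $b$ are both invertible.

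For statement (i), Remark \ref{rem11}(i) makes $u$ and $v$ nilpotent; then $u\otimes b$ and $e\otimes v$ are nilpotent (since $(x\otimes y)^k=x^k\otimes y^k$), and being commuting nilpotents their sum $a\otimes b-\lambda$ is nilpotent, so Remark \ref{rem11}(i) gives $\sigma(a\otimes b)=\Pi(a\otimes b)=\{\lambda\}$.

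For statement (ii), assume first $\sigma(a)=I(a)=\{\mu\}$, so $u$ is quasi-nilpotent but not nilpotent. As $\sigma(a\otimes b-\lambda)=\{0\}$ the element is quasi-nilpotent, and I would prove it is not nilpotent by contradiction: if $(a\otimes b-\lambda)^N=0$, expanding the commuting binomial yields
$$\sum_{k=0}^{N}\binom{N}{k}\mu^{N-k}\,u^{k}\otimes b^{k}v^{N-k}=0.$$
This equality holds in $A\overline{\otimes}B$, but its left-hand side already lies in $A\otimes B$, which embeds injectively into its completion, so the relation is valid in $A\otimes B$ itself. Applying the algebraic slice map $\textrm{id}_A\otimes\varphi\colon A\otimes B\to A$ for an arbitrary $\varphi\in B^{*}$ then gives $p_\varphi(u)=0$, where $p_\varphi(z)=\sum_{k}\binom{N}{k}\mu^{N-k}\varphi(b^{k}v^{N-k})z^{k}$.

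The hard part, and the heart of the argument, is the conclusion that $a\otimes b-\lambda$ cannot be nilpotent. For this I would use that a quasi-nilpotent but not nilpotent element is annihilated by no nonzero polynomial: if $p\neq0$ and $p(u)=0$, factor $p(z)=z^{m}q(z)$ with $q(0)\neq0$; then $\sigma(q(u))=\{q(0)\}\not\ni0$ makes $q(u)$ invertible, forcing $u^{m}=0$, against non-nilpotency. Hence every coefficient of $p_\varphi$ vanishes, and in particular its top coefficient $\varphi(b^{N})$ is $0$ for all $\varphi\in B^{*}$, so $b^{N}=0$ by Hahn--Banach, contradicting the invertibility of $b$. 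Therefore $a\otimes b-\lambda$ is quasi-nilpotent but not nilpotent, and Remark \ref{rem11}(ii) finishes this case; the case $\sigma(b)=I(b)=\{\nu\}$ is symmetric, using instead $a\otimes b-\lambda=a\otimes v+\nu\,(u\otimes e)$ and a slice map on the $A$ factor. The one subtlety to watch is the legitimacy of the slice map, which is why I stress that the relevant power sits in the purely algebraic tensor product, so no continuity is required.
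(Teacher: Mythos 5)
Your proof of statement (i) is precisely the paper's own: the same commuting decomposition $a\otimes b-\lambda=(a-\mu)\otimes b+\mu\,(e\otimes(b-\nu))$ and the fact that a sum of two commuting nilpotents is nilpotent.

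Your proof of statement (ii) is correct, but it takes a genuinely different route. Both proofs reduce, via Remark \ref{rem11}(i)--(ii) with $p=e$, to deciding whether $a\otimes b-\lambda$ is nilpotent or quasi-nilpotent but not nilpotent, and both start from the same commuting binomial expansion; they diverge at the proof of non-nilpotency. The paper argues analytically: writing (with the roles of the two factors exchanged relative to your choice) $a\otimes b-\lambda=c\otimes b+e\otimes d$, where $c=a-\mu$ is quasi-nilpotent and $d=\mu(b-\nu)$ is quasi-nilpotent but not nilpotent, it invokes \cite[Theorem 7.4.2]{H} (a quasi-nilpotent element is a topological divisor of zero) to produce $h_n$ with $\parallel h_n\parallel=1$ and $ch_n\to 0$, sets $l_k=2e/\parallel d^k\parallel$, and then bounds $\parallel (a\otimes b-\lambda)^k\, h_{n_k}\otimes l_k\parallel>1$ from below for every $k$: the $j=0$ term $h_{n_k}\otimes d^kl_k$ of the expansion has norm exactly $2$ by the cross property, while the $j\ge 1$ terms, each containing a factor $c^jh_{n_k}$, are made of total norm $<1$ by choosing $n_k$ large. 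You replace the topological-divisor-of-zero theorem and these estimates by purely algebraic tools: injectivity of the canonical map $A\otimes B\to A\overline{\otimes}B$, an algebraic slice map, the observation that a quasi-nilpotent non-nilpotent element is annihilated by no nonzero polynomial, and Hahn--Banach. The point on which your argument stands or falls --- that an element of $A\otimes B$ vanishing in $A\overline{\otimes}B$ vanishes algebraically --- does hold here, since a crossnorm is by definition a norm on $A\otimes B$, so the embedding into the completion is isometric. As for what each approach buys: the paper's proof stays entirely inside the completed algebra and uses only metric properties, exhibiting explicit witnesses that each power is nonzero; yours is estimate-free and, once the completion fact is granted, more elementary, and it transfers immediately to the operator-ideal setting of section 5, where the elementary tensors are honest finite-rank operators and injectivity is automatic.
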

\pagestyle{myheadings} \markboth{  }{ \hskip7truecm \rm Enrico Boasso  }
\begin{proof} First of all, note that $\sigma (a\otimes b)=\{\lambda\}$.\par
\noindent (i). Since $a\otimes b-\lambda= (a-\mu)\otimes b+\mu \otimes (b-\nu)$, $a-\mu\in A$ and $b-\nu\in B$
are nilpotent and  $(a-\mu)\otimes b$ and $\mu \otimes (b-\nu)$ commute, $a\otimes b-\lambda$ is nilpotent.\par
\noindent (ii) Suppose that $b-\nu\in B$ is quasi-nilpotent but not nilpotent (the case $a-\mu\in A$ is
quasi-nilpotent but not nilpotent can be proved using similar argments).  Let $c=a-\mu$, $d=\mu (b-\nu)$
and note that $a\otimes b-\lambda=c\otimes b+e\otimes d$ and $c\otimes b$ and $e\otimes d$ commute. 
Now well, since $c\in A$ is quasi-nilpotent, according to \cite[Theorem 7.4.2]{H}, there is a sequence
$(h_n)_{n\in\mathbb N}\subseteq A$ such that $\parallel h_n\parallel =1$ and $(ch_n)_{n\in\mathbb N}\subseteq A$
converges to $0\in A$. In addition, since $d\in B$ is quasi-nilpotent but not nilpotent, for each 
$k\in \mathbb N$, $d^k\neq 0$ and $l_k=\frac{2e}{\parallel d^k\parallel}$ is such that $\parallel d^kl_k\parallel=2$.
Next consider $z_{k,j}=\frac{k!}{j!(k-j)!}$, where $1\le j\le k\in\mathbb N$. Then, an easy calculation proves that given 
$k\in\mathbb N$, there is $n_k\in \mathbb N$ such that 
$$
\parallel (\sum_{j=1}^k z_{j,k}c^j\otimes b^jd^{k-j})h_{n_k}\otimes l_k\parallel<1.
$$
In particular,
$$
\parallel (a\otimes b-\lambda)^kh_{n_k}\otimes l_k\parallel=\parallel (c\otimes b+e\otimes d)^kh_{n_k}\otimes l_k\parallel\ge
\parallel h_{n_k}\otimes d^kl_k\parallel -\parallel (\sum_{j=1}^k z_{j,k}c^j\otimes b^jd^{k-j})h_{n_k}\otimes l_k\parallel>1.
$$
Therefore, $a\otimes b-\lambda\in A\overline{\otimes } B$ is quasi-nilpotent but not nilpotent, equivalently
$\lambda\in I(a\otimes b)$.
\end{proof}

\indent In the following theorem, the non-null isolated points of the spectrum of the tensor product 
of two Banach algebra elements will be fully characterized. To this end, note that if $A$ is a  unital Banach algebra and $p^2=p\in A$, then
the spectrum of $z\in pA$ in the unital Banach algebra $pA$ will be denoted by
$\sigma_{pA} (z)$. In addition,  if $q^2=q\in B$, then with the norm restricted from
$A\overline{\otimes }B$, $pA\overline{\otimes} qB$ is a Banach algebra with a
uniform crossnorm.\par

\begin{thm}\label{thm1}Let $A$ and $B$ be two unital Banach algebras and consider
$a\in A$ and $b\in B$. Then, the following statements hold.\par
\noindent \rm (i) \it $\mathbb L= I(a\otimes b)\setminus \{0\}$.\par
\noindent \rm (ii) \it $\Pi (a\otimes b)\setminus \{0\}= (\Pi (a)-\{0\})(\Pi (b)-\{0\})\setminus \mathbb L$.
\end{thm}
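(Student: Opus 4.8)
The plan is to prove (i) by a local spectral‑idempotent analysis and then to deduce (ii) purely formally. The basic fact I would lean on is that, by \cite[Theorem 12]{Bo} applied to $a\otimes b\in A\overline{\otimes}B$, the decomposition iso $\sigma(a\otimes b)=\Pi(a\otimes b)\sqcup I(a\otimes b)$ is disjoint; hence every $\lambda\in$ iso $\sigma(a\otimes b)\setminus\{0\}$ belongs to exactly one of $\Pi(a\otimes b)$ and $I(a\otimes b)$, and it suffices to decide which one in terms of the factorizations of $\lambda$.

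So fix $\lambda\in$ iso $\sigma(a\otimes b)\setminus\{0\}$ and let $(\mu_1,\nu_1),\dots,(\mu_n,\nu_n)$ be the distinct factorizations $\lambda=\mu_i\nu_i$ with $\mu_i\in$ iso $\sigma(a)\setminus\{0\}$, $\nu_i\in$ iso $\sigma(b)\setminus\{0\}$ furnished by Remark \ref{rem11}(v); distinctness of the pairs forces the $\mu_i$ to be pairwise distinct and likewise the $\nu_i$. I would take the spectral idempotents $p_i\in A$ of $a$ at $\mu_i$ and $q_i\in B$ of $b$ at $\nu_i$, so $p_ip_j=\delta_{ij}p_i$, $q_iq_j=\delta_{ij}q_i$, $ap_i=p_ia$, $bq_i=q_ib$, $\sigma_{p_iAp_i}(ap_i)=\{\mu_i\}$ and $\sigma_{q_iBq_i}(bq_i)=\{\nu_i\}$, and set $P=\sum_{i=1}^n p_i\otimes q_i$. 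Orthogonality of the $p_i$ and of the $q_i$ shows that $P$ is a nonzero idempotent commuting with $a\otimes b$ and that
$$P(A\overline{\otimes} B)P=\bigoplus_{i=1}^n p_iAp_i\,\overline{\otimes}\,q_iBq_i,\qquad P(a\otimes b-\lambda)P=\bigoplus_{i=1}^n\bigl((ap_i)\otimes(bq_i)-\lambda\,(p_i\otimes q_i)\bigr),$$
the blocks being mutually orthogonal. Since, by Remark \ref{rem11}(i)--(ii), $\mu_i\in\Pi(a)$ (resp.\ $\mu_i\in I(a)$) exactly when $ap_i-\mu_ip_i$ is nilpotent (resp.\ quasi-nilpotent but not nilpotent) in $p_iAp_i$, and similarly for $b$, Theorem \ref{thm6} applies in each corner tensor algebra: the $i$-th block is nilpotent when $\mu_i\in\Pi(a)$ and $\nu_i\in\Pi(b)$, and quasi-nilpotent but not nilpotent as soon as $\mu_i\in I(a)$ or $\nu_i\in I(b)$. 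As an orthogonal direct sum of quasi-nilpotent blocks, $P(a\otimes b-\lambda)P$ is then quasi-nilpotent, and it is nilpotent if and only if every $i$ satisfies $\mu_i\in\Pi(a)$ and $\nu_i\in\Pi(b)$.

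The point I expect to be the main obstacle is to verify that the complementary part $P'(a\otimes b-\lambda)P'$, $P'=e-P$, is invertible in $P'(A\overline{\otimes}B)P'$, for only then do the criteria of Remark \ref{rem11}(i)--(ii) become applicable to the idempotent $P$. To handle it I would complete the $p_i$ to a full orthogonal family by adjoining $p_\infty=e-\sum_{i=1}^n p_i$, the spectral idempotent of $a$ for $\sigma(a)\setminus\{\mu_1,\dots,\mu_n\}$ (possibly zero), and likewise $q_\infty=e-\sum_{j=1}^n q_j$; then $e-P$ is the sum of all $p_i\otimes q_j$ except the matched pairs $(i,i)$, $1\le i\le n$, so that $P'(A\overline{\otimes}B)P'$ is the orthogonal direct sum of the corners $p_iAp_i\,\overline{\otimes}\,q_jBq_j$ over the unmatched indices, on each of which the restricted element has spectrum $\sigma_{p_iAp_i}(ap_i)\,\sigma_{q_jBq_j}(bq_j)$. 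Invertibility of $P'(a\otimes b-\lambda)P'$ thus amounts to $\lambda$ lying in none of these products, and this is exactly where the uniqueness clause of Remark \ref{rem11}(v) enters: any factorization $\lambda=\mu\nu$ with $\mu\in\sigma(a)\setminus\{0\}$ and $\nu\in\sigma(b)\setminus\{0\}$ must coincide with some matched pair $(\mu_i,\nu_i)$. A short case analysis over the unmatched blocks (both indices finite and distinct; one index finite and the other the ``remainder'' $\infty$; both remainders), using that $\mu_i,\nu_i$ determine each other through $\lambda\neq0$ and that the $\mu_i$ and the $\nu_i$ are distinct, then excludes $\lambda$ from every such spectral product.

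With $P'(a\otimes b-\lambda)P'$ invertible established, Remark \ref{rem11}(i)--(ii) give the dichotomy: $\lambda\in\Pi(a\otimes b)$ precisely when all factorizations satisfy $\mu_i\in\Pi(a)$ and $\nu_i\in\Pi(b)$, and $\lambda\in I(a\otimes b)$ precisely when some factorization has $\mu_i\in I(a)$ or $\nu_i\in I(b)$. Since the latter condition says exactly that $\lambda$ admits a factorization landing in one of the three products defining $\mathbb{L}$, this yields $I(a\otimes b)\setminus\{0\}=\mathbb{L}$, which is statement (i). Finally (ii) follows formally: combining the disjoint decomposition iso $\sigma(a\otimes b)\setminus\{0\}=(\Pi(a\otimes b)\setminus\{0\})\sqcup(I(a\otimes b)\setminus\{0\})$ with Remark \ref{rem11}(iv), which gives iso $\sigma(a\otimes b)\setminus\{0\}=\mathbb{L}\cup(\Pi(a)\setminus\{0\})(\Pi(b)\setminus\{0\})$, and with (i), one obtains $\Pi(a\otimes b)\setminus\{0\}=\bigl(\mathbb{L}\cup(\Pi(a)\setminus\{0\})(\Pi(b)\setminus\{0\})\bigr)\setminus\mathbb{L}=(\Pi(a)\setminus\{0\})(\Pi(b)\setminus\{0\})\setminus\mathbb{L}$.
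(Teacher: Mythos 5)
Your proposal is correct and takes essentially the same route as the paper's own proof: the same idempotent $r=\sum_{i=1}^n p_i\otimes q_i$ built from the factorizations and uniqueness clause of Remark \ref{rem11}(v), invertibility of the complementary corner $(e-r)(a\otimes b-\lambda)(e-r)$ checked block by block via that uniqueness clause, Theorem \ref{thm6} applied to the matched blocks to decide nilpotence versus quasi-nilpotence, and the equivalent formulation of Remark \ref{rem11}(i)--(ii) to conclude. Statement (ii) is deduced from (i) and Remark \ref{rem11}(iv) exactly as in the paper, so no further comment is needed.
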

\pagestyle{myheadings} \markboth{  }{ \hskip7truecm \rm Enrico Boasso  }
\begin{proof}Note that according to Remark \ref{rem11}(iv), statement (i) implies statement (ii).\par
\indent To prove statement (i), let $\lambda\in $ iso $\sigma (a\otimes b)\setminus\{0\}$. Then,
according to Remark \ref{rem11}(v), there exist $n\in \mathbb N$ and finite spectral sets
$\{\mu\}=\{\mu_1 \dots , \mu_n\}\subseteq $ iso $\sigma (a)$ and 
$\{\nu\}=\{\nu_1 \dots , \nu_n\}\subseteq $ iso $\sigma (b)$ such that $\lambda=\mu_i\nu_i$,
for all $1\le i\le n$
and that if there exist $\mu\in$  $\sigma(a)\setminus \{ 0\}$
and  $\nu\in$ $\sigma(b)\setminus \{ 0\}$ such that $\lambda=\mu\nu$, then there is 
$i_0$, $1\le i_0\le n$, with the property $\mu=\mu_{i_0}$ and $\nu=\nu_{i_0}$. In addition, using the functional calculus it is not difficult to prove that
there are idempotents $p$ and $(p_i)_{i=1}^n$ in $A$ and $q$ and $(q_i)_{i=1}^n$ in $B$
such that $p=\sum_{i=1}^n p_i$, $p_ip_j=0$ and $p_iap_j=0$ for $i\neq j$, $p'p_i=0$ ($p'=e-p$), $p_iap'=0$ and $p_ia=ap_i=p_iap_i$, $1\le i\le n$,
$pap'=p'ap=0$,
$A=pA\oplus p'A$, $pA=\oplus_{i=1}^n p_iA$,
$\sigma_{pA}(pa)=\{\mu\}$, $\sigma_{p_iA}(p_ia)=\sigma_{pA}(p_ia)=\sigma(p_ia)=\{\mu_i\}$,
$\sigma(p'a)_{p'A}=\sigma (a)\setminus\{\mu\}$ and 
 $q=\sum_{i=1}^n q_i$, $q_iq_j=0$ and $q_ibq_j=0$ for $i\neq j$, $q'q_i=0$ ($q'=e-q$), $q_ibq'=0$ and $q_ib=bq_i=q_ibq_i$, $1\le i\le n$, 
$B=qB\oplus q'B$, $qB=\oplus_{i=1}^n q_iB$,
$\sigma_{qB}(qb)=\{\nu\}$, $\sigma_{q_iB}(q_ib)=\sigma_{qB}(q_ib)=\sigma(q_ib)=\{\nu_i\}$,
$\sigma(q'b)_{q'B}=\sigma (b)\setminus\{\nu\}$. Next, define $r=\sum_{i=1}^n p_i\otimes q_i\in A\overline{\otimes }B$.
Then, it is not difficult to prove that $0\neq r=r^2$, $ra\otimes b=a\otimes br$ and $e-r=p\otimes q' +p'\otimes q+p'\otimes q' +\sum_{1\le i\neq j\le n} p_i\otimes q_j$.\par

\indent Note that $\lambda\notin\sigma_{pA\overline{\otimes}q'B}(p\otimes q' a\otimes b)=\sigma_{pA\overline{\otimes}q'B}(pa\otimes q'b)
=\sigma_{pA} (pa)\sigma_{q'B}(q'b)=\{\mu\}(\sigma (b)-\{\nu\})$. However, a straightforward calculation, using in
particular this latter fact, proves that $z_1=p\otimes q' a\otimes b p\otimes q'-\lambda p\otimes q'$ is invertible in $p\otimes q' A\overline{\otimes}B p\otimes q'$.
What is more, similar arguments prove that $z_2=p'\otimes q a\otimes bp'\otimes q-\lambda p'\otimes q$
is invertible in $p'\otimes q A\overline{\otimes}B p'\otimes q$, 
$z_3=p'\otimes q' a\otimes bp'\otimes q'-\lambda p'\otimes q'$
is invertible in $p'\otimes q' A\overline{\otimes}B p'\otimes q'$
and $z_{i,j}=p_i\otimes q_j a\otimes bp_i\otimes q_j-\lambda p_i\otimes q_j$
is invertible in $p_i\otimes q_j A\overline{\otimes}B p_i\otimes q_j$,
for $1\le i\neq j\le n$.   Now well, according to the properties of the
idempotents $p$, $p_i$, $q$ and $q_i$, $1\le i\le n$, recalled in the previous paragraph, it is not difficult to prove that
 $(e-r)(a\otimes b-\lambda)(e-r)=z_1+z_2+z_3+\sum_{1\le i \neq j\le n}z_{i,j}$,
$(e-r)A\overline{\otimes}B(e-r)=pAp\overline{\otimes} q'Bq'\oplus p'Ap'\overline{\otimes} qBq\oplus p'Ap'\overline{\otimes} q'Bq'
\oplus_{1\le i\neq j\le n}p_iAp_i\overline{\otimes}q_j Bq_j$ and  $(e-r)(a\otimes b-\lambda)(e-r)$ is invertible in $(e-r)A\overline{\otimes}B(e-r)$.\par

\indent Moreover, using again the properties of the aforementioned idempotents, it is possible  to prove that 
$rA\overline{\otimes}Br=\oplus_{1\le i\le n} p_iAp_i\overline{\otimes }q_iBq_i$ and $r(a\otimes b-\lambda)r=\sum_{1\le i\le n} p_i\otimes q_i(a\otimes b-\lambda)p_i\otimes q_i$.
What is more, since $p_i\otimes q_i(a\otimes b-\lambda)p_i\otimes q_i$ is quasi-nilpotent in $p_i\otimes q_iA\overline{\otimes} Bp_i\otimes q_i$ ($1\le i\le n$),
$r(a\otimes b-\lambda)r$ is quasi-nilpotent. Note that $r(a\otimes b-\lambda)r$ is nilpotent if and only if $p_i\otimes q_i(a\otimes b-\lambda)p_i\otimes q_i$
is nilpotent for all  $i=1, \dots , n$.\par

\indent Let $\lambda \in I(a\otimes b)\setminus\{0\}$. Suppose that for all $i=1, \cdots , n$, $\mu_i\in \Pi (a)$ and $\nu_i\in \Pi (b)$. 
In particular, according to Remark \ref{rem11}(i) and the properties of the idempotents $p_i$ and $q_i$, $1\le i\le n$, 
$\sigma_{p_iA } (p_ia)=\Pi(p_ia)=\{\mu_i\}$ and $\sigma_{q_iB } (q_ib)=\Pi(q_ib)=\{ \nu_i \}$.  However, according to
Theorem \ref{thm6}(i), $\sigma_{p_iA\overline{\otimes} q_iB}(p_ia\otimes q_ib)=\Pi (p_ia\otimes q_ib)=\{\mu_i\nu_i\}=\{\lambda\}$
($1\le i\le n$). As a result, $p_iap_i\otimes q_ibq_i-\lambda p_i\otimes q_i=p_ia\otimes q_ib-\lambda p_i\otimes q_i$
is nilpotent in $p_iA\overline{\otimes}q_iB$, and hence in   $p_iAp_i\overline{\otimes}q_iBq_i$ ($i=1, \dots ,n$).
Consequently, $r(a\otimes b-\lambda)r\in rA\overline{\otimes}Br$ is nilpotent. Now well, according to what has been proved, 
$(e-r)(a\otimes b-\lambda)(e-r)$ is invertible in $(e-r)A\overline{\otimes}B(e-r)$. Then, according to Remark \ref{rem11}(i) and its equivalent
formulation (see Remark \ref{rem11}), $\lambda\in \Pi (a\otimes b)$,
which  is impossible. Therefore, $ I(a\otimes b)\setminus\{0\}\subseteq\mathbb L$.\par

\indent On the other hand, if $\lambda\in \mathbb L$, then according to Remark \ref{rem11}(ii) and the properties of the idempotents $p_i$ and $q_i$, 
$1\le i\le n$, there is $i_0$, $1\le i_0\le n$, such that 
$\sigma_{p_{i_0}A } (p_{i_0}a)= I(p_{i_0}a)=\{\mu_{i_0}\}$ or $\sigma_{q_{i_0}B } (q_{i_0}b)= I (q_{i_0}b)=\{ \nu_{i_0} \}$.
Therefore, according to Thorem \ref{thm6}(ii), 
$\sigma_{p_{i_0}A\overline{\otimes} q_{i_0}B}(p_{i_0}a\otimes q_{i_0}b)=I (p_{i_0}a\otimes q_{i_0}b)=\{\mu_{i_0}\nu_{i_0}\}=\{\lambda\}$.
Then, $p_{i_0}ap_{i_0}\otimes q_{i_0}bq_{i_0}-\lambda p_{i_0}\otimes q_{i_0}=p_{i_0}a\otimes q_{i_0}b-\lambda p_{i_0}\otimes q_{i_0}$
is quasi-nilpotent but not nilpotent in $p_{i_0}A\overline{\otimes}q_{i_0}B$, and hence in   $p_{i_0}Ap_{i_0}\overline{\otimes}q_{i_0}Bq_{i_0}$.
As a result,  $r(a\otimes b-\lambda)r\in rA\overline{\otimes}Br$
is quasi-nilpotent but not nilpotent. Now well, as in the previous paragraph,  
$(e-r)(a\otimes b-\lambda)(e-r)$ is invertible in $(e-r)A\overline{\otimes}B(e-r)$. Then, according to Remark \ref{rem11}(ii) and its 
equivalent formulation (see Remark \ref{rem11}), $\lambda\in I(a\otimes b)\setminus\{0\}$. Thus, $ \mathbb L\subseteq I(a\otimes b)\setminus\{0\}$.
\end{proof}
\section {\sfstp The Drazin spectrum }\setcounter{df}{0}
\
\indent In this section, the Drazin spectrum of the tensor product of  two
Banach algebra elements will be characterized. To this end, given $A$ and $B$ two unital Banach algebras
and $a\in A$ and $b\in B$, consider the sets $\mathbb A= \sigma (a)$(acc $\sigma (b))\cup $ (acc
$\sigma (a))\sigma (b)$ and
$\mathbb B =I(a)I(b)\cup I(a)\Pi (b)\cup \Pi (a)I(b)$.
 Note that according to \cite[Theorem 6]{HK} and Theorem \ref{thm1}, $\mathbb A\setminus\{0\}=$ acc $\sigma (a\otimes b)\setminus\{0\}$
and $\mathbb B\setminus\{0\}=\mathbb L= I(a\otimes b)\setminus\{0\}$, respectively.
Recall in addition that,  according to \cite[Theorem 12(i)-(ii)]{Bo}, given a unital Banach algebra $A$ and $a\in A$,  necessary and sufficient for 
 $\sigma_{DR}(a)= \emptyset$ is that $\sigma (a)=\Pi (a)$.
In the following theorem, the relationship between $\sigma_{DR}(a\otimes b) $ and $\mathbb{D}= \sigma (a) \sigma_{DR}(b)\cup \sigma_{DR} (a)\sigma(b)$ will be studied.

\begin{thm} \label{thm2}Let $A$ and $B$ be two unital Banach algebras and consider $a\in A$ and $b\in B$.
Then, the following staements hold.\par
\noindent \rm (i)\it  If $\sigma (a)=\Pi (a)$ and $\sigma (b)=\Pi (b)$, then $\mathbb D=\emptyset =\sigma_{DR} (a\otimes b)$.\par
\noindent \rm (ii)\it If $\sigma_{DR}(a)\neq \emptyset$ or $\sigma_{DR}(b)\neq \emptyset$, then $\sigma_{DR}(a\otimes b)\setminus \{0\}=\mathbb D\setminus\{0\}$
 and $\sigma_{DR}(a\otimes b)\subseteq \mathbb D$.
 \end{thm}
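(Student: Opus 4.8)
The plan is to reduce the whole theorem to a single set-theoretic identity, namely $\mathbb{D} = \mathbb{A}\cup\mathbb{B}$, and then to read the result off from the decomposition $\sigma_{DR}(a\otimes b) = \acc\,\sigma(a\otimes b)\cup I(a\otimes b)$ of \cite[Theorem 12]{Bo}, together with the two descriptions $\mathbb{A}\setminus\{0\} = \acc\,\sigma(a\otimes b)\setminus\{0\}$ and $\mathbb{B}\setminus\{0\} = I(a\otimes b)\setminus\{0\}$ already recorded before the statement. The only genuinely delicate point is the behaviour at $\lambda=0$, where those descriptions are silent; this is precisely where the hypothesis of (ii) and Theorem \ref{thm3} must enter.

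First I would establish $\mathbb{D} = \mathbb{A}\cup\mathbb{B}$. Writing $\sigma(a) = \acc\,\sigma(a)\cup I(a)\cup\Pi(a)$ as a disjoint union (since $\Pi(a), I(a)\subseteq\iso\sigma(a)$ and $I(a)=\iso\sigma(a)\setminus\Pi(a)$) and $\sigma_{DR}(a) = \acc\,\sigma(a)\cup I(a)$, and similarly for $b$, I would expand both $\sigma(a)\sigma_{DR}(b)\cup\sigma_{DR}(a)\sigma(b)$ and $\mathbb{A}\cup\mathbb{B}$ by distributing the set products over these unions. Both expansions collapse to the same eight products of the pieces $\acc\,\sigma(a), I(a), \Pi(a)$ with $\acc\,\sigma(b), I(b), \Pi(b)$, the only one missing from each being $\Pi(a)\Pi(b)$; hence they coincide. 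This is routine bookkeeping, but it is the computational heart of the argument and is consistent with Theorem \ref{thm1}(ii), which assigns the removed piece $\Pi(a)\Pi(b)$ to $\Pi(a\otimes b)$.

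Granting this, the first assertion of (ii) is immediate: $\sigma_{DR}(a\otimes b)\setminus\{0\} = (\acc\,\sigma(a\otimes b)\cup I(a\otimes b))\setminus\{0\} = (\mathbb{A}\cup\mathbb{B})\setminus\{0\} = \mathbb{D}\setminus\{0\}$, with no use of the hypothesis. For (i), the hypotheses $\sigma(a)=\Pi(a)$ and $\sigma(b)=\Pi(b)$ give $\sigma_{DR}(a)=\sigma_{DR}(b)=\emptyset$, whence $\mathbb{A}=\mathbb{B}=\mathbb{D}=\emptyset$ and $\sigma_{DR}(a\otimes b)\subseteq\{0\}$; it then suffices to show $0\notin\sigma_{DR}(a\otimes b)$. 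This I would obtain from Theorem \ref{thm3}(iii): whenever $0\in\sigma(a\otimes b)=\sigma(a)\sigma(b)$, one of $a,b$ has $0$ as a pole while the other is invertible or also has $0$ as a pole, so $0\in\Pi(a\otimes b)$ and thus $0\notin\sigma_{DR}(a\otimes b)$.

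The remaining and most delicate task is the inclusion $\sigma_{DR}(a\otimes b)\subseteq\mathbb{D}$ in (ii), which by the nonzero identity reduces to the implication $0\in\sigma_{DR}(a\otimes b)\Rightarrow 0\in\mathbb{D}$; I would argue the contrapositive. Assuming $0\notin\mathbb{D}$, I would split on which of $\sigma_{DR}(a),\sigma_{DR}(b)$ is nonempty, the hypothesis guaranteeing at least one is. If both are nonempty, then $0\notin\mathbb{D}$ forces $0\notin\sigma(a)$ and $0\notin\sigma(b)$, hence $0\notin\sigma(a)\sigma(b)=\sigma(a\otimes b)$ and $0\notin\sigma_{DR}(a\otimes b)$. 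If, say, only $\sigma_{DR}(a)\neq\emptyset$, then $0\notin\mathbb{D}$ forces $b$ invertible and $0\notin\sigma_{DR}(a)$, i.e.\ $0\notin\sigma(a)$ or $0\in\Pi(a)$; in the first case $a\otimes b$ is invertible, and in the second Theorem \ref{thm3}(iii) gives $0\in\Pi(a\otimes b)$, so again $0\notin\sigma_{DR}(a\otimes b)$. I expect this case analysis around $0$ — and the correct invocation of Theorem \ref{thm3}(iii) to convert ``pole times invertible'' into ``pole'' for the tensor product — to be the main obstacle, the algebraic identity $\mathbb{D}=\mathbb{A}\cup\mathbb{B}$ being otherwise a mechanical expansion.
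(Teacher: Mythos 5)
Your proposal is correct and follows essentially the same route as the paper: both arguments rest on the pre-stated identifications $\mathbb{A}\setminus\{0\}=\acc\,\sigma(a\otimes b)\setminus\{0\}$ and $\mathbb{B}\setminus\{0\}=I(a\otimes b)\setminus\{0\}$, on the decomposition $\sigma_{DR}(\cdot)=\acc\,\sigma(\cdot)\cup I(\cdot)$ from \cite[Theorem 12]{Bo}, and on Theorem \ref{thm3}(iii) to settle the point $0$ (your contrapositive case analysis at $0$ is exactly the paper's ``if $0\in\sigma(a\otimes b)\setminus\mathbb{D}$ then $0\in\Pi(a\otimes b)$'' step). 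The only difference is organizational: you establish the unconditional identity $\mathbb{D}=\mathbb{A}\cup\mathbb{B}$ once, by expanding into the eight products of $\acc\,\sigma,I,\Pi$, whereas the paper verifies the same equality (off $0$) case by case according to which of $\sigma_{DR}(a)$, $\sigma_{DR}(b)$ is empty.
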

\begin{proof} (i) Since the condition in the statement under consideration is equivalent to $\sigma_{DR}(a)=\emptyset$ and 
$\sigma_{DR}(b)=\emptyset$,  $\mathbb{D}=\emptyset$. On the other hand,  according to Theorem \ref{thm1}(i) and Theorem \ref{thm3}(iii),
 $\sigma (a\otimes b)=\Pi (a)\Pi (b)=$ 
iso $\sigma (a\otimes b)=\Pi (a\otimes b)$, which implies that $\sigma_{DR}(a\otimes b)=\emptyset$.\par

\noindent (ii) Suppose that $\sigma (a)=\pi (a)$ and $\sigma_{DR}(b)\neq \emptyset$. Then, 
$\mathbb D\setminus\{0\}=
(\Pi (a)\setminus\{0\})$ (acc $\sigma ( b)\setminus\{0\}) \cup (\Pi (a)\setminus\{0\}) (I(b)\setminus\{0\})=(\mathbb A\setminus\{0\})\cup (\mathbb B\setminus\{0\})
=\sigma_{DR}(a\otimes b) \setminus\{0\}$.\par

\indent  Note that if $0\notin\sigma (a\otimes b)$, then $\sigma_{DR}(a\otimes b)= \mathbb D$. As a result,
to prove that $\sigma_{DR}(a\otimes b)\subseteq \mathbb D$, it is enough to prove that if $0\in \sigma(a\otimes b)\setminus \mathbb D$,
then $0\notin \sigma_{DR}(a\otimes b)$. Now well, since $\sigma (a\otimes b)=\Pi (a)\Pi (b)\cup \mathbb D$, if $0\in \sigma (a\otimes b)\setminus \mathbb D$,
then it is not difficult to prove that $0\notin\Pi (a)$ and $0\in \Pi (b)$. However, according to Theorem \ref{thm3}(iii), 
$0\in\Pi (a\otimes b)$.\par

\indent The case $\sigma_{DR}(a)\neq \emptyset$ and $\sigma (b)=\Pi (b)$ can be proved interchanging $a$ with $b$.\par

\indent Next suppose that  $\sigma_{DR}(a)\neq \emptyset$ and $\sigma_{DR}(b)\neq \emptyset$. Using arguments similar
to the ones considered before, it is easy to prove that 
$\mathbb D\setminus\{0\}=(\mathbb A\setminus\{0\})\cup (\mathbb B\setminus\{0\})
=\sigma_{DR}(a\otimes b) \setminus\{0\}$ (\cite[Theorem 12(iv)]{Bo}). In addition, as before, 
 if $0\notin\sigma (a\otimes b)$, then $\sigma_{DR}(a\otimes b)= \mathbb D$.
On the other hand, if $0\in\sigma (a\otimes b)$, then $0\in\sigma (a)$
or $0\in\sigma (b)$. However, since $\sigma_{DR}(a)\neq \emptyset$ and $\sigma_{DR}(b)\neq \emptyset$,
$0\in \mathbb D$. Therefore, $\sigma_{DR}(a\otimes b)\subseteq \mathbb D$.
\end{proof}

\indent Under the same conditions of Theorem \ref{thm2}, note that $\sigma_{DR}(a\otimes b)$ and $\mathbb D$, in general, do not
coincide. In fact, if $a\in A$ is nilpotent (equivalently if $\sigma (a)=\Pi (a)=\{0\}$) and $b\in B$ is such that $\sigma_{DR}(b) \neq\emptyset$, then 
$\sigma_{DR}(a\otimes b)=\emptyset$ and $\mathbb D=\{0\}$. In fact, in this case $a\otimes b\in A\overline{\otimes} B$
is nilpotent, $\sigma (a\otimes b)=\Pi (a\otimes b )=\{0\}$, $\sigma_{DR} (a\otimes b)=\emptyset$ and $\mathbb D=\{0\}$.
Note, however, that if $a\in A$ is nilpotent and $\sigma_{DR}(b) =\emptyset$, then according to Theorem \ref{thm2}(i),
$\sigma_{DR}(a\otimes b)=\emptyset =\mathbb D$.
In addition, according to Theorem \ref{thm2}(ii),  $\sigma_{DR}(a\otimes b)\subsetneq \mathbb D$ if and only if $\mathbb D= \sigma_{DR}(a\otimes b) \cup\{0\}$,
$0\notin\sigma_{DR} (a\otimes b)$,
which in turn is equivalent to the fact that $0\in\mathbb D\cap\Pi (a\otimes b)$.
In the following theorems it will be characterized when  $\sigma_{DR}(a\otimes b)$ and $\mathbb D$ coincide. In first
place, the case in which only one Drazin spectrum is the empty set will be studied.
Naturally, nilpotent elements will  not be considered.
\par
\begin{thm} \label{thm4}Let $A$ and $B$ be two unital Banach algebras and consider $a\in A$ and $b\in B$.\par
\noindent \rm (i) \it If $\sigma (a)=\Pi (a)\neq \{0\}$ and $\sigma_{DR}(b)\neq\emptyset$, then necessary and sufficient for
$\sigma_{DR}(a\otimes b)$ and $\mathbb D$ to coincide is that $0\notin \Pi (a)$ or $0\notin\rho_{DR}(b)$.\par
\noindent \rm (ii)\it If $\sigma_{DR}(a )\neq\emptyset$ and $\sigma (b)=\Pi (b)\neq \{0\}$, then necessary and sufficient for
$\sigma_{DR}(a\otimes b)$ and $\mathbb D$ to coincide is that $0\notin\rho_{DR}(a)$ or $0\notin \Pi (b)$.
\end{thm}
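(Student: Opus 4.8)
The plan is to reduce the whole question to the behaviour of the two sets at the single point $0$. Since $\sigma_{DR}(b)\neq\emptyset$, Theorem \ref{thm2}(ii) applies and gives both $\sigma_{DR}(a\otimes b)\subseteq\mathbb D$ and $\sigma_{DR}(a\otimes b)\setminus\{0\}=\mathbb D\setminus\{0\}$. Hence the two sets can differ only at $0$, and (as observed in the paragraph following Theorem \ref{thm2}) they fail to coincide precisely when $0\in\mathbb D\cap\Pi(a\otimes b)$. It therefore suffices to prove the equivalence
$$
0\in\mathbb D\cap\Pi(a\otimes b)\quad\Longleftrightarrow\quad 0\in\Pi(a)\ \text{and}\ 0\in\rho_{DR}(b),
$$
since the statement of the theorem is exactly its negation. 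Before doing so I record that the hypothesis $\sigma(a)=\Pi(a)$ forces $\sigma_{DR}(a)=\emptyset$, so that $\mathbb D=\Pi(a)\,\sigma_{DR}(b)$; as $\Pi(a)=\sigma(a)\neq\emptyset$ and $\sigma_{DR}(b)\neq\emptyset$, this means $0\in\mathbb D$ if and only if $0\in\Pi(a)$ or $0\in\sigma_{DR}(b)$. The hypothesis $\sigma(a)=\Pi(a)\neq\{0\}$ moreover guarantees that $a$ is not nilpotent and that $\sigma(a)\neq\{0\}$, the two side conditions needed to invoke the relevant parts of Theorem \ref{thm3}.

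For the implication from right to left I would assume $0\in\Pi(a)$ and $0\in\rho_{DR}(b)$. The first assumption gives $0\in\mathbb D$ at once. Recalling $\Pi(b)=\sigma(b)\cap\rho_{DR}(b)$, the second assumption says that either $b$ is invertible or $0\in\Pi(b)$; in the first situation Theorem \ref{thm3}(iii) (the case $0\in\Pi(a)$ with $b$ invertible) and in the second situation the same part (the case $0\in\Pi(a)\cap\Pi(b)$) yield $0\in\Pi(a\otimes b)$. Thus $0\in\mathbb D\cap\Pi(a\otimes b)$.

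For the reverse implication I would argue contrapositively, assuming $0\notin\Pi(a)$ or $0\notin\rho_{DR}(b)$ and deducing $0\notin\mathbb D\cap\Pi(a\otimes b)$. If $0\notin\Pi(a)$, then $\sigma(a)=\Pi(a)$ forces $a$ to be invertible; should $0\in\rho_{DR}(b)$ also hold, then $0\notin\mathbb D$ by the formula $\mathbb D=\Pi(a)\,\sigma_{DR}(b)$, whereas if $0\in\sigma_{DR}(b)$ then $0\in I(b)$ or $0\in\acc\sigma(b)$, and Theorem \ref{thm3}(v) (when $0\in I(b)$, using $a$ invertible) or Theorem \ref{thm3}(vii) (when $0\in\acc\sigma(b)$, using $\sigma(a)\neq\{0\}$) puts $0$ into $I(a\otimes b)$ or $\acc\sigma(a\otimes b)$, hence into $\sigma_{DR}(a\otimes b)$, so that $0\notin\Pi(a\otimes b)$. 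The only remaining case is $0\in\Pi(a)$ together with $0\in\sigma_{DR}(b)$; here $a$ is not nilpotent, and again $0\in I(b)$ or $0\in\acc\sigma(b)$, so Theorem \ref{thm3}(iv) respectively Theorem \ref{thm3}(vii) gives $0\in I(a\otimes b)$ respectively $0\in\acc\sigma(a\otimes b)$, whence once more $0\notin\Pi(a\otimes b)$. In each case $0\notin\mathbb D\cap\Pi(a\otimes b)$, which completes the contrapositive and hence the equivalence.

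Finally, statement (ii) follows from (i) by interchanging $a$ and $b$: the set $\mathbb D=\sigma(a)\sigma_{DR}(b)\cup\sigma_{DR}(a)\sigma(b)$ is symmetric in the two elements, and every ingredient used above---Theorem \ref{thm2}(ii) and the pertinent parts of Theorem \ref{thm3}---is itself stated symmetrically in $a$ and $b$. Consequently the criterion ``$0\notin\Pi(a)$ or $0\notin\rho_{DR}(b)$'' of (i) becomes ``$0\notin\rho_{DR}(a)$ or $0\notin\Pi(b)$'', which is precisely (ii). The main obstacle to watch throughout is ensuring that the side hypotheses of Theorem \ref{thm3}---that $a$ be non-nilpotent in part (iv) and that $\sigma(a)\neq\{0\}$ in part (vii)---are actually available; both are furnished by the standing assumption $\sigma(a)=\Pi(a)\neq\{0\}$, so that the proof amounts to a careful case analysis of the location of $0$ relative to $\Pi(a)$ and $\sigma_{DR}(b)$.
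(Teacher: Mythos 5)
Your proof is correct and follows essentially the same route as the paper's: both reduce the question to the location of $0$ using Theorem \ref{thm2}(ii), and then run the identical case analysis on the position of $0$ relative to $\Pi(a)$, $\rho_{DR}(b)$, $I(b)$ and the accumulation points of $\sigma(b)$, invoking Theorem \ref{thm3}(iii), (iv), (v) and (vii) at exactly the same points, with (ii) obtained by symmetry in both cases. The only difference is organizational --- you phrase the reduction as the equivalence ``non-coincidence $\Leftrightarrow$ $0\in\mathbb D\cap\Pi(a\otimes b)$'' and prove that, while the paper derives a contradiction in each direction --- but the mathematical content is the same.
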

\begin{proof} (i) Note that $\mathbb D=\Pi (a)\sigma_{DR} (b)$. Suppose that $\sigma_{DR}(a\otimes b)=\mathbb D$. Now well, if $0\in \Pi (a)$ and $0\in\rho_{DR}(b)$, then it
is clear that $0\in\mathbb D$ and, according to Theorem \ref{thm3}(iii), $0\in\Pi (a\otimes b)$. In particular,  $0\in\mathbb D\setminus\sigma_{DR} (a\otimes b)$, which is impossible. \par

\indent To prove the converse, suppose that $a$ is invertible or $b$ is not Drazin invertible. If  
$\sigma_{DR}(a\otimes b)$ and $\mathbb D$ does not coincide, then $0\in\mathbb D\setminus \sigma_{DR}(a\otimes b)$ thanks to Theorem \ref{thm2}(ii).
Now well, if $0\notin \sigma (a)=\Pi (a)$, then $0\in\sigma_{DR} (b)=$ acc $\sigma (b)\cup I(b)$.
Clearly, if $0\in$ acc $\sigma (b)$, then $0\in$ acc $\sigma (a\otimes b)\subseteq \sigma_{DR} (a\otimes b)$,
while if $0\in I(b)$, then according to Theorem \ref{thm3}(v), $0\in I(a\otimes b) \subseteq \sigma_{DR} (a\otimes b)$,
which  is impossible. On the other hand, suppose that $0\in \Pi (a)$ and $0\in \sigma_{DR} ( b)=$ acc $\sigma (b)\cup I(b)$.
If $0\in$ acc $\sigma (b)$, then since $\sigma (a)=\Pi (a)\neq \{0\}$, $0\in $ acc $\sigma (a\otimes b)\subseteq \sigma_{DR} (a\otimes b)$,
while if $0\in I(b)$, then according to Theorem \ref{thm3}(iv), $0\in I(a\otimes b)\subseteq \sigma_{DR} (a\otimes b)$, which is impossible.\par

\noindent (ii) Interchange $a$ with $b$ and use the same argument.
\end{proof}

\indent Note that to prove the following theorem, the description of the spectrum and the Drazin spectrum
considered in \cite[Theorem 12]{Bo} will be central.\par

\begin{thm} \label{thm5}Let $A$ and $B$ be two unital Banach algebras and consider $a\in A$ and $b\in B$ 
such $\sigma_{DR}(a )\neq\emptyset$ and $\sigma_{DR}(b )\neq\emptyset$.
Then, the following statements are equivalent.\par
\noindent \rm (i)\it $\mathbb D=\sigma_{DR}(a \otimes b)$;\par
\noindent \rm (ii)\it $a\in A$ and $b\in B$ are invertible or $a\otimes b$ is not Drazin invertible;\par
\noindent  \rm (iii)\it $a\in A$ and $b\in B$ are invertible or $0\notin ( \Pi (a)\cap \rho_{DR} (b) \cup  \rho_{DR}(a)\cap  \Pi (b))$.
\end{thm}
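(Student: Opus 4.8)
The plan is to reduce the entire equivalence to the behaviour of the single point $0$, exploiting that by Theorem \ref{thm2}(ii) one already has $\sigma_{DR}(a\otimes b)\subseteq\mathbb D$ together with $\sigma_{DR}(a\otimes b)\setminus\{0\}=\mathbb D\setminus\{0\}$; thus the two sets can differ only at $0$. First I would record two elementary facts. On one hand, since $\sigma(a)$ and $\sigma(b)$ are nonempty and $\sigma_{DR}(a),\sigma_{DR}(b)$ are nonempty by hypothesis, a direct inspection of $\mathbb D=\sigma(a)\sigma_{DR}(b)\cup\sigma_{DR}(a)\sigma(b)$ shows that $0\in\mathbb D$ if and only if $0\in\sigma(a)$ or $0\in\sigma(b)$, i.e.\ if and only if $a$ and $b$ are not both invertible. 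On the other hand, $0\notin\sigma_{DR}(a\otimes b)$ is by definition the same as $a\otimes b$ being Drazin invertible. Combining these with the inclusion above, the two sets coincide exactly when they agree at $0$, which (since $\sigma_{DR}(a\otimes b)\subseteq\mathbb D$) happens precisely when $0\notin\mathbb D$ or $0\in\sigma_{DR}(a\otimes b)$, that is, when $a$ and $b$ are invertible or $a\otimes b$ is not Drazin invertible. This yields (i)$\Leftrightarrow$(ii) with essentially no computation.

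For (ii)$\Leftrightarrow$(iii) I would first unpack the resolvent sets appearing in (iii). Using the partition $\mathbb C=\rho(\cdot)\sqcup\sigma_{DR}(\cdot)\sqcup\Pi(\cdot)$ coming from \cite[Theorem 12]{Bo}, one has $\rho_{DR}(b)=\rho(b)\cup\Pi(b)$, so that $0\in\Pi(a)\cap\rho_{DR}(b)\cup\rho_{DR}(a)\cap\Pi(b)$ is equivalent to one of three configurations: $0\in\Pi(a)$ and $b$ invertible; $a$ invertible and $0\in\Pi(b)$; or $0\in\Pi(a)\cap\Pi(b)$. Since both (ii) and (iii) have the form ``$a$ and $b$ invertible, or $[\dots]$'', it suffices to prove, under the standing assumption that $a$ and $b$ are not both invertible (so that $0\in\sigma(a\otimes b)$ and hence $a\otimes b$ is Drazin invertible iff $0\in\Pi(a\otimes b)$), that $0\in\Pi(a\otimes b)$ if and only if $0$ lies in one of those three configurations. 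The implication from the three configurations to $0\in\Pi(a\otimes b)$ is exactly Theorem \ref{thm3}(iii).

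The hard part will be the converse, for which I would argue by contraposition: assuming $0$ is in none of the three configurations (and that $a,b$ are not both invertible), I must show $0\in\sigma_{DR}(a\otimes b)=\acc\,\sigma(a\otimes b)\cup I(a\otimes b)$. Here a case analysis on the position of $0$ relative to $a$ and to $b$ becomes unavoidable: each factor has $0$ in $\rho$, in $\Pi$, in $I$, or in $\acc\,\sigma$, and the excluded configurations are precisely the Drazin-invertible pairs $(\Pi,\rho),(\rho,\Pi),(\Pi,\Pi)$. For every remaining pair I would invoke the appropriate part of Theorem \ref{thm3}: parts (iv) and (v) dispatch the pairs in which one factor lies in $I$ and the other is a pole or invertible; part (vi) handles $0\in I(a)\cap I(b)$; and part (vii) handles the pairs with $0\in\acc\,\sigma$ for one factor. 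At this stage the standing hypotheses $\sigma_{DR}(a)\neq\emptyset\neq\sigma_{DR}(b)$ are used to force $a$ and $b$ to be non-nilpotent, legitimizing the non-nilpotency assumptions in Theorem \ref{thm3}(iv),(vii); they also rule out $\sigma(b)=\{0\}$ whenever $0\in\Pi(b)$ or $0\in\acc\,\sigma(b)$.

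The one genuinely delicate point I anticipate is the use of Theorem \ref{thm3}(vii), which requires $\sigma(b)\neq\{0\}$ (respectively $\sigma(a)\neq\{0\}$). This hypothesis can fail only when the relevant factor is quasi-nilpotent but not nilpotent, i.e.\ in the pair $(\acc\,\sigma, I)$ with $\sigma(b)=\{0\}$ (and symmetrically $(I,\acc\,\sigma)$ with $\sigma(a)=\{0\}$), where $\sigma(a\otimes b)=\sigma(a)\{0\}=\{0\}$ and (vii) is inapplicable. I would settle this case directly: since neither $a$ nor $b$ is nilpotent, $a\otimes b$ is not nilpotent, so by Remark \ref{rem11}(i) one cannot have $\sigma(a\otimes b)=\Pi(a\otimes b)=\{0\}$, whence $0\notin\Pi(a\otimes b)$; as $\sigma(a\otimes b)=\{0\}$ consists of a single isolated point, this leaves $0\in I(a\otimes b)\subseteq\sigma_{DR}(a\otimes b)$, as required. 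Once every pair has been accounted for, the contrapositive is complete, giving (ii)$\Leftrightarrow$(iii) and hence the full chain of equivalences.
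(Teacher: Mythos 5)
Your proof is correct and follows essentially the same route as the paper's: both reduce the whole equivalence to the behaviour of the single point $0$ via Theorem \ref{thm2}(ii), use Theorem \ref{thm3}(iii) for the Drazin-invertible configurations $(\Pi,\rho)$, $(\rho,\Pi)$, $(\Pi,\Pi)$, and settle the remaining positions of $0$ (including the delicate quasi-nilpotent case $\sigma(\cdot)=\{0\}$, which the paper dispatches with Theorem \ref{thm3}(ii)) by the same case analysis through Theorem \ref{thm3}(iv)--(vii). The only difference is organizational --- you establish (i)$\Leftrightarrow$(ii) and (ii)$\Leftrightarrow$(iii) directly, whereas the paper argues cyclically (i)$\Rightarrow$(ii)$\Rightarrow$(iii)$\Rightarrow$(i) --- but the mathematical content is identical.
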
 
\begin{proof}(i)$\Rightarrow$ (ii). If  $a\in A$ or $b\in B$ is not
invertible, then $0\in\sigma (a\otimes b)$. Now well, since $\sigma_{DR}(a )\neq\emptyset$ and $\sigma_{DR}(b )\neq\emptyset$,
$0\in \mathbb D=\sigma_{DR}(a \otimes b)$. In particular, $a\otimes b$ is not Drazin invertible. \par
\noindent (ii) $\Rightarrow$ (iii). Suppose that $0\in \sigma (a\otimes b)$. If $0\in \Pi (a)\cap \rho_{DR} (b)$
or  $0\in \rho_{DR} (a)\cap \Pi (b) $, then according to Theorem \ref{thm3}(iii), $0\in \Pi (a\otimes b)$, which is impossible.\par
\noindent (iii) $\Rightarrow$ (i). If $a\in A$ and $b\in B$ are invertible, then $0\notin\sigma (a\otimes b)$ and, 
according to Theorem \ref{thm2}(ii), $\mathbb D=\sigma_{DR}(a \otimes b)$. On the other hand, if
$0\in\sigma (a\otimes b)$, then $0\in\sigma (a)$ or $0\in\sigma (b)$, and as before, $0\in  \mathbb D$. Suppose that
$0\in\sigma (a)$ (the case $0\in\sigma (b)$ can be proved interchanging $a$ with $b$). To prove that $0\in\sigma_{DR}(a\otimes b)$, several cases
must be considered.\par

\indent If $0\in \Pi (a)$, then since $a$ is not nilpotent ($\sigma_{DR}(a)\neq\emptyset$), there is $\mu\in \sigma (a)\setminus \{0\}$. In
particular, if $0\in $ acc $\sigma (b)$, then $0\in$ acc $\sigma_{DR} (a\otimes b)\subseteq \sigma_{DR} (a\otimes b)$. In addition,
if $0\in I(b)$, according to Theorem \ref{thm3}(iv), $0\in I(a\otimes b)\subseteq\sigma_{DR} (a\otimes b)$.\par

\indent Next suppose that $0\in$ acc $\sigma (a)$. If $b$ is invertible or if $0\in $ acc $\sigma(b)$, then clearly
$0\in $ acc $\sigma_{DR} (a\otimes b)\subseteq \sigma_{DR} (a\otimes b)$. If $0\in \Pi (b)$,
then as before, since $b$ is not nilpotent, there exists $\nu\in\sigma (b)\setminus\{0\}$. 
In particular, $0\in $ acc $\sigma_{DR} (a\otimes b)\subseteq \sigma_{DR} (a\otimes b)$.
If $0\in I(b)$, then there are two possibilities. If there is $\nu\in\sigma (b)\setminus\{0\}$, then 
$0\in $ acc $\sigma_{DR} (a\otimes b)\subseteq \sigma_{DR} (a\otimes b)$, while if $\sigma (b)=I(b)=\{0\}$,
according to Theorem \ref{thm3}(ii), $0\in I(a\otimes b)\subseteq \sigma_{DR} (a\otimes b)$.\par

\indent Now consider the case $0\in I(a)$. As before, if $\sigma (a)=I(a)=\{0\}$, then since $b$ is not nilpotent,
$0\in I(a\otimes b)\subseteq \sigma_{DR}(a\otimes b)$. Concerning the case $\sigma(a)\setminus\{0\}\neq\emptyset$,
 if $0\in (\rho (b)\cup\Pi(b) \cup I(b))$, then according to Theorem \ref{thm3}(iv-vi), $0\in I(a\otimes b)\subseteq \sigma_{DR} (a\otimes b)$,
while if $0\in $ acc $\sigma(b)$,  $0\in$ acc $\sigma(a\otimes b)\subseteq \sigma_{DR} (a\otimes b)$.
\end{proof}

\section {\sfstp Elementary operators}\setcounter{df}{0}
\
\indent Recall that given Banach spaces $X$ and $Y$ and Banach space operator $S\in L(X)$ and $T\in L(Y)$,
the elementary operator defined by $S$ and $T$ is $\ST\colon L(Y,X)\to L(Y,X)$, $\ST (A)=SAT$, where $A\in L(Y,X)$
and $L(Y,X)$ stands for the Banach space of all bounded linear maps from $Y$ into $X$.
In this section the Drazin spectrum of  elementary operators between Banach spaces will be studied. To this end,
however, some preparation is needed. In first place, the definition of the axiomatic
tensor product of Banach spaces 
introduced  in \cite{E} will be recalled. 
\par
\pagestyle{myheadings} \markboth{  }{ \hskip7truecm \rm Enrico Boasso  }
\indent A pair $\langle X, \tilde{X}\rangle$ of Banach spaces will be called a dual 
pairing, if
$$
(A)\hbox{  }\tilde{X}=X^* \hbox{   or   }(B)\hbox{  }X=
\tilde{X}^*,
$$
where $X^*$ denotes the dual space of $X$. In both cases, the canonical bilinear mapping is denoted by
$$
X\times\tilde{X}\to\Bbb C ,\hbox{   }(x,u)\mapsto\langle x,u\rangle.
$$

\indent Given $\langle X,\tilde{X}\rangle$ a dual pairing, consider the 
subalgebra ${\mathcal L}(X)$ of $L(X)$ consisting of all 
operators $T\in L(X)$ for which there is an operator $T{'}
\in L(\tilde{X})$ with
$$
\langle Tx,u\rangle=\langle x,T{'}u\rangle,
$$
for all $x\in X$ and $u\in\tilde{X}$. It is clear that if the dual pairing 
is $\langle X,X^*\rangle$, then ${\mathcal L}(X)=L(X)$, and that if the 
dual pairing is $\langle X^*,X\rangle$, then ${\mathcal L}(X^*)=\{T^*\in L(X^*)\colon \hbox{ }T
\in  L(X)\}$, where $T^*\in L(X^*)$ denotes the adjoint map of
$T\in L(X)$. In particular, each operator of the form
$$
f_{y,v}\colon X\to X,\hbox{}x\mapsto\langle x,v\rangle y,
$$
is contained in ${\mathcal L(X)}$, for $y\in X$ and $v\in\tilde{X}$.
\par 

\indent Next the definition of the axiomatic tensor product given in \cite{E} will be recalled.\par

\begin{df}\label{def51}  
 Given two dual pairings $\langle X,\tilde{X}\rangle$ and 
$\langle Y,\tilde{Y}\rangle$, a tensor product of the Banach spaces $X$ and $Y$ 
relative to the dual pairings $\langle X,\tilde{X}\rangle$ and 
$\langle Y,\tilde{Y}\rangle$ is a Banach space $Z$ together with two
continuous bilinear mappings
$$
X\times Y\to Z,\hskip.5truecm(x,y)\mapsto x\otimes y;\hskip.5truecm {\mathcal L}(X)
\times{\mathcal L}(Y)\mapsto L(Z),\hskip.5truecm(T,S)\mapsto T\otimes S,
$$  
which satisfy the following conditions,

\noindent \rm (T1)\it $\hbox{  }\parallel x\otimes y\parallel=\parallel x\parallel
\parallel y\parallel$,\par
\noindent \rm (T2)\it $\hbox{  }T\otimes S(x\otimes y)=(Tx)\otimes(Sy)$,\par
\noindent \rm(T3)\it $\hbox{  }(T_1\otimes S_1)\circ (T_2\otimes S_2)=(T_1T_2)
\otimes(S_1S_2),\hbox{}I\otimes I=I$,\par
\noindent \rm (T4)\it $\hbox{  }R(f_{x,u}\otimes I)\subseteq\{x\otimes y\colon\hbox{}y
\in Y\},\hbox{}R(f_{y,v}\otimes I)\subseteq\{x\otimes y\colon\hbox{}x
\in X\}$.\par
\end{df}
\indent As in \cite{E}, $X\tilde{\otimes} Y$ will be written instead of $Z$. In addition, in \cite{E} two  
 main applications of Definition \ref{def51} were considered, namely, 
the completion $X\tilde{\otimes_{\alpha}}Y$ of the 
algebraic tensor product  of the
Banach spaces $X$ and $Y$ with respect to a quasi-uniform crossnorm 
$\alpha$, see \cite{I}, and an operator ideal between Banach spaces, which will be
the case considered in this section.\par

\begin{df}\label{def52} An operator ideal $J$ between Banach spaces $Y$ 
and $X$ is a linear subspace of $ L(Y,X)$ equipped 
with a 
space norm $\alpha$ such that\par
i)  $x\otimes y'\in J$ and $\alpha (x\otimes y')=
\parallel x
\parallel \parallel y'\parallel$,\par
ii) $SAT\in J$ and $\alpha (SAT)\le \parallel S\parallel \alpha (A) 
\parallel T\parallel$, \par

for $x\in X$, $y'\in Y'$, $A\in J$, 
$S\in L(X)$,
$T\in L(Y)$, and $x\otimes y'$ is the usual rank one
operator $Y\to X$, $y\mapsto <y,y'>x$.\par
\end{df}

\indent Examples of this kind of ideals were given in \cite{E,P}.\par

\indent Recall that, under the conditions of Definition \ref{def52}, an  operator ideal $J$ is naturally a tensor
product relative to $\langle X,X{'}\rangle$ and $\langle Y',
Y\rangle$, with the 
bilinear mappings
$$
X\times Y'\to J,\hbox{  } (x,y')\mapsto x\otimes y',
$$
$$
{\mathcal L}(X)\times{\mathcal L}(Y')\to  L(J),
\hbox{  } (S,T')\mapsto S\otimes T',
$$
where $S\otimes T' (A)=SAT$. In particular,
when $J=L(L(Y,X))$, $S\otimes T=\ST$, the elementary operator
defined by $S$ and $T$.\par 
\pagestyle{myheadings} \markboth{  }{ \hskip7truecm \rm Enrico Boasso  }
\indent Recall, in addition, that according to \cite[Corollaries 3.3-3.4]{E}, 
$\sigma (\ST)=\sigma (S)\sigma (T)$. Furthermore, note that if $Z$
is a tensor product relative to $\langle X, X^*\rangle$ and $\langle Y^*, Y\rangle$,
and if $M\subseteq X$ and $N\subseteq Y$ are closed and complemented subspaces
of $X$ and $Y$ respectively, then $P\otimes Q^* (Z)\subseteq Z$ is a tensor product
relative to $\langle M, M^*\rangle$ and $\langle N^*, N\rangle$, where $P^2=P\in L(X)$
and $Q^2=Q\in L(Y)$ are such that $R(P)=M$ and $R(Q)=N$  (actually, similar results
can be proved for all kinds of tensor product satisfying Definition \ref{def51}, however, since
the main objective of this section concerns operators ideals, this case has been focused on). 
On the other hand, if $W$ is a Banach space and $V\in L(W)$, then according to \cite[p. 139]{MM} and \cite[Theorems 3 and 12]{Bo},
$\Pi (V^*)=\Pi (V)$ and $\sigma_{DR} (V^*)=\sigma_{DR} (V)$, which implies that $I(V^*)=I(V)$ (naturally $\sigma (V^*)=\sigma (V)$,
acc $\sigma (V^*)=$ acc $\sigma (V)$ and iso $\sigma(V^*)=$ iso $\sigma (V)$). \par

\indent  Now well, it is not difficult to check that
arguments similars to the ones in the proofs of Theorems \ref{thm3}-\ref{thm1} can be used, considering in particular 
what has been recalled in the previous paragraph, to characterize 
the isolated points of $\sigma (S\otimes T)$, $S\otimes T\in L(Z)$,
$Z$ a tensor product relative to  dual pairings $\langle X,\tilde{X}\rangle$ and 
$\langle Y,\tilde{Y}\rangle$, $X$ and $Y$ Banach spaces and $S\in L(X)$, $T\in L(Y)$.
Therefore, the isolated points of $\sigma (\ST)$ can be fully described as it has been done
in section 2, where $\ST\in L(J)$, $J$ an operator ideal between Banach spaces $Y$ and $X$
that satisfies Definition \ref{def52}, $S\in L(X)$ and $T\in L(Y)$; this result naturally applies to $J=L(L(Y,X))$.
Furthermore, the arguments in Theorems \ref{thm2}-\ref{thm5} can be adapted to the objects satisfying
Definitions \ref{def51}-\ref{def52}. As a result, the Drazin spectrum of elementary operators is fully
characterized.  Note that in the following theorem, $\mathbb{D}= \sigma (S) \sigma_{DR}(T)\cup \sigma_{DR} (S)\sigma(T)$,
$S$ and $T$ as before.\par

\begin{thm} \label{thm53}Let $X$ and $Y$ be two  Banach spaces and consider $S\in L(X)$ and $T\in L(Y)$.
Then, the following statements hold.\par
\noindent \rm (i)\it  If $\sigma (S)=\Pi (S)$ and $\sigma (T)=\Pi (T)$, then $\mathbb D=\emptyset =\sigma_{DR} (\ST)$.\par
\noindent \rm (ii)\it If $\sigma_{DR}(S)\neq \emptyset$ or $\sigma_{DR}(T)\neq \emptyset$, then $\sigma_{DR}(\ST)\setminus \{0\}=\mathbb D\setminus\{0\}$,
$\sigma_{DR}(\ST)\subseteq \mathbb D$, and if $\sigma_{DR}(\ST)\subsetneq \mathbb D$, then 
$\mathbb D= \sigma_{DR}(\ST)\cup \{0\}$, $0\notin\sigma_{DR} (\ST)$.\par
\noindent \rm (iii) \it If $\sigma (S)=\Pi (S)\neq \{0\}$ and $\sigma_{DR}(T)\neq\emptyset$, then necessary and sufficient for
$\sigma_{DR}(\ST)$ and $\mathbb D$ to coincide is that $0\notin \Pi (S)$ or $0\notin\rho_{DR}(T)$.\par
\noindent \rm (iv)\it If $\sigma_{DR}(S )\neq\emptyset$ and $\sigma (T)=\Pi (T)\neq \{0\}$, then necessary and sufficient for
$\sigma_{DR}(\ST)$ and $\mathbb D$ to coincide is that $0\notin\rho_{DR}(S)$ or $0\notin \Pi (T)$.\par
\noindent \rm (v) \it Suppose that $\sigma_{DR}(S )\neq\emptyset$ and $\sigma_{DR}(T )\neq\emptyset$. Then, 
the following statements are equivalent.\par
\noindent \rm (va)\it $\mathbb D=\sigma_{DR}(\ST)$;\par
\noindent \rm (vb)\it $S$ and $T$ are invertible or $\ST$ is not Drazin invertible;\par
\noindent  \rm (vc)\it $S$ and $T$ are invertible or $0\notin ( \Pi (S)\cap \rho_{DR} (T) \cup  \rho_{DR}(S)\cap  \Pi (T))$.
 \end{thm}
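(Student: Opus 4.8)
The plan is to reduce the entire statement to the Banach-algebra results already proved in Section~4, namely Theorems~\ref{thm2}, \ref{thm4} and \ref{thm5}, by transferring them to the axiomatic tensor product setting of Definitions~\ref{def51}--\ref{def52}. Parts (i)--(ii) are the operator counterparts of Theorem~\ref{thm2} (together with the remark following it describing the strict inclusion $\sigma_{DR}(\ST)\subsetneq\mathbb D$), parts (iii)--(iv) are the counterparts of Theorem~\ref{thm4}, and part (v) is the counterpart of Theorem~\ref{thm5}. Thus, once the analogues of the isolated-point Theorems~\ref{thm3}, \ref{thm6} and \ref{thm1} are available for $\ST$, the five assertions follow by repeating those proofs, with $a\otimes b\in A\overline{\otimes} B$ replaced by $\ST\in L(J)$ and the sets $\mathbb A$, $\mathbb B$, $\mathbb D$ replaced by their $(S,T)$-versions.

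First I would establish the isolated-point machinery for $\ST$. Recall from the discussion preceding the theorem that an operator ideal $J$ satisfying Definition~\ref{def52} is a tensor product $X\tilde{\otimes} Y'$ relative to the dual pairings $\langle X,X'\rangle$ and $\langle Y',Y\rangle$, with $S\otimes T'(A)=SAT$, and that $\ST=S\otimes T'$ when $J=L(L(Y,X))$. Three structural inputs make the Banach-algebra proofs go through: the spectral formula $\sigma(\ST)=\sigma(S)\sigma(T)$ from \cite[Corollaries~3.3--3.4]{E}, which plays the role of $\sigma(a\otimes b)=\sigma(a)\sigma(b)$; the fact that for complemented $M=R(P)\subseteq X$ and $N=R(Q)\subseteq Y$ the operator $P\otimes Q^*$ restricts $Z$ to a tensor product relative to the compressed pairings $\langle M,M^*\rangle$ and $\langle N^*,N\rangle$, which supplies the idempotent decompositions and compressions used throughout Theorems~\ref{thm3}--\ref{thm1}; and the adjoint invariances $\Pi(V^*)=\Pi(V)$, $\sigma_{DR}(V^*)=\sigma_{DR}(V)$, $I(V^*)=I(V)$ (hence $\sigma(V^*)=\sigma(V)$, $\acc\,\sigma(V^*)=\acc\,\sigma(V)$, $\iso\sigma(V^*)=\iso\sigma(V)$) recorded before the theorem. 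This last point is crucial: it guarantees that the ``primed'' operator $T'\in L(Y')$ is spectrally indistinguishable from $T$ on every relevant set, so that $S\otimes T'$ has exactly the same isolated/limit-point structure as the elementary tensor $a\otimes b$ under $a\leftrightarrow S$, $b\leftrightarrow T$. With these inputs the proofs of Theorems~\ref{thm3}, \ref{thm6} and \ref{thm1} transcribe, yielding the operator versions of $\mathbb A\setminus\{0\}=\acc\,\sigma(\ST)\setminus\{0\}$ and $\mathbb B\setminus\{0\}=I(\ST)\setminus\{0\}$, together with the position of $0$ as in Theorem~\ref{thm3}.

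Granting the isolated-point results, I would then read off parts (i)--(v). Part (i) follows as in Theorem~\ref{thm2}(i): $\sigma(S)=\Pi(S)$ and $\sigma(T)=\Pi(T)$ force $\sigma_{DR}(S)=\sigma_{DR}(T)=\emptyset$, hence $\mathbb D=\emptyset$, while the operator forms of Theorem~\ref{thm1}(i) and Theorem~\ref{thm3}(iii) give $\sigma(\ST)=\Pi(\ST)$, i.e. $\sigma_{DR}(\ST)=\emptyset$. Part (ii) is Theorem~\ref{thm2}(ii), the final clause on the strict inclusion being exactly the remark following Theorem~\ref{thm2}, which shows that in that case $0\in\mathbb D\cap\Pi(\ST)$, so $\mathbb D=\sigma_{DR}(\ST)\cup\{0\}$ with $0\notin\sigma_{DR}(\ST)$. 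Parts (iii) and (iv) reproduce Theorem~\ref{thm4}(i)--(ii), their only inputs being Theorem~\ref{thm3}(iii)--(vi) in operator form. Finally, part (v) repeats the chain (va)$\Rightarrow$(vb)$\Rightarrow$(vc)$\Rightarrow$(va) of Theorem~\ref{thm5}, appealing only to Theorem~\ref{thm3}(ii)--(vi).

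I expect the single genuine obstacle to be the careful verification that the abstract axioms (T1)--(T4) of Definition~\ref{def51} together with the ideal axioms of Definition~\ref{def52} reproduce every computational step of Theorems~\ref{thm3}--\ref{thm1}. Two points deserve attention. First, the crossnorm identity (T1) must be used in place of the uniform-crossnorm hypothesis to guarantee that nilpotency (resp. quasi-nilpotency) of $\ST$ forces the corresponding property on the factors, as in Theorem~\ref{thm3}(i),(vi) and in the estimate of Theorem~\ref{thm6}(ii). Second, the idempotent compressions $P_i\otimes Q_j^*$ must be shown to split $Z$ into the required finite direct sum of sub-tensor-products, so that the quasi-nilpotent-but-not-nilpotent dichotomy of Theorem~\ref{thm1} survives on $L(J)$. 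Once these two verifications are discharged, every remaining estimate transfers without change and the theorem follows.
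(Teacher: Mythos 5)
Your proposal is correct and follows essentially the same route as the paper: the paper's proof is precisely "adapt Theorems \ref{thm2}--\ref{thm5}," relying on the discussion preceding the theorem (the formula $\sigma(\ST)=\sigma(S)\sigma(T)$ from \cite{E}, the restriction of tensor products to complemented subspaces via $P\otimes Q^*$, and the adjoint invariances $\Pi(V^*)=\Pi(V)$, $\sigma_{DR}(V^*)=\sigma_{DR}(V)$) to transfer the isolated-point machinery of Theorems \ref{thm3}--\ref{thm1} and then the Drazin-spectrum arguments to $\ST\in L(J)$. Your elaboration of the transfer, including the role of (T1) and the idempotent splittings, simply makes explicit the details the paper leaves to the reader.
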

\begin{proof}Adapt the proofs of Theorems  \ref{thm2}-\ref{thm5} to the case under consideration. Details are left to the reader.
\end{proof}
\vskip.3truecm

\vskip.3truecm

\bigskip

\noindent \normalsize \rm Enrico Boasso\par
  \noindent  E-mail: enrico\_odisseo@yahoo.it \par

\end{document}